\newtheorem{lem}{Lemma}[section]
\newtheorem{thm}{Theorem}[section]
\newtheorem{cor}{Corollary}[section]
\newtheorem{remark}{Remark}
\theoremstyle{definition}
\begin{document}
\title{On graphs with $m(\partial^L_1)=n-3$ \footnote{Supported
by the National Natural Science Foundation of China (Grant Nos. 11671344, 11531011, 11626205).}}
\author{{\small Lu Lu, \ \ Qiongxiang Huang\footnote{
Corresponding author.
Email: huangqx@xju.edu.cn},\ \ Xueyi Huang}\\[2mm]\scriptsize
College of Mathematics and Systems Science,
\scriptsize Xinjiang University, Urumqi, Xinjiang 830046, P.R. China}
\date{}
\maketitle {\flushleft\large\bf Abstract } Let $\partial^L_1\ge\partial^L_2\ge\cdots\ge\partial^L_n$ be the distance Laplacian eigenvalues of a connected graph $G$ and $m(\partial^L_i)$ the multiplicity of $\partial^L_i$. It is well known that the graphs with $m(\partial^L_1)=n-1$ are complete graphs. Recently, the graphs with $m(\partial^L_1)=n-2$ have been characterized by Celso et al. In this paper, we completely determine the graphs with $m(\partial^L_1)=n-3$.
\begin{flushleft}
\textbf{Keywords:} Distance Laplacian matrix; Laplacian matrix; Largest eigenvalue; Determined by distance Laplacian spectrum
\end{flushleft}
\textbf{AMS subject classifications:} 05C50; 05C12; 15A18

\section{Introduction}\label{se-1}
In this paper we only consider simple connected graphs. Let $G=(V,E)$ be a connected graph with vertex set $V=\{v_1,v_2,\ldots,v_n\}$ and edge set $E=\{e_1,e_2,\ldots,e_m\}$. The \emph{distance} between $v_i$ and $v_j$, denoted by $d_G(v_i,v_j)$, is defined as the length of a shortest path between them. The \emph{diameter} of $G$, denoted by $d(G)$, is the maximum distance between any two vertices of $G$. The \emph{distance matrix} of $G$, denoted by $\mathcal{D}(G)$, is the $n\times n$ matrix whose $(i,j)$-entry is equal to $d_G(v_i,v_j)$, $i,j=1,2,\ldots,n$. The \emph{transmission} $Tr(v_i)$ of a vertex $v_i$ is defined as the sum of the distances between $v_i$ and all other vertices in $G$, that is, $Tr(v_i)=\sum_{j=1}^n d_G(v_i,v_j)$. For more details about the distance matrix we refer the readers to \cite{Aouchiche}. Aouchiche and Hansen \cite{Aouchiche-2} introduced the Laplacian for the distance matrix of $G$ as $\mathcal{D}^L(G)=Tr(G)-\mathcal{D}(G)$, where $Tr(G)=diag(Tr(v_1),Tr(v_2),\ldots,Tr(v_n))$ is the diagonal matrix of the vertex transmissions in $G$. The eigenvalues of $\mathcal{D}^L(G)$, listed by $\partial^L_1\ge\partial^L_2\ge\cdots\ge\partial_n=0$, are called the \emph{distance Laplacian eigenvalues} of $G$. The multiplicity of $\partial^L_i$ is denoted by $m(\partial^L_i)$. The distance eigenvalues together with their multiplicities is called the \emph{distance Laplacian spectrum} of $G$, denoted by $\emph{Spec}_{\mathcal{L}}(G)$.

The distance Laplacian matrix aroused many active studies, such as \cite{Tian,Aouchiche,Nath,Celso}. Graphs with few distinct eigenvalues form an interesting class of graphs and possess nice combinatorial properties. With respect to distance Laplacian eigenvalues, we focus on the graphs with $m(\partial^L_1)$ being large. Denote by $\mathcal{G}(n)$ the set of connected graphs of order $n$. Let $\mathcal{G}(n,k)=\{G\in\mathcal{G}(n)\mid m(\partial^L_1)=k\}$ be the set of connected graphs with $m(\partial^L_1)=k$. Aouchiche and Hansen \cite{Aouchiche} proved that $\mathcal{G}(n,n-1)=\{K_n\}$ and conjectured that $\mathcal{G}(n,n-2)=\{K_{1,n-1},K_{n/2,n/2}\}$, which has been confirmed by Celso et al. \cite{Celso}. Motivated by their work, we try to characterize $\mathcal{G}(n,n-3)$. In this paper, we completely determine the graphs in $\mathcal{G}(n,n-3)$ (Theorem \ref{thm-3-3}). By the way, we show that all these graphs are determined by their distance Laplacian spectra (Corollary \ref{cor-3-3}).

\section{Preliminaries}\label{se-2}
Let $G$ be a connected graph, we always denote by $N_G(v)$ the neighbuor set of $v$ in $G$, that is, $N_G(v)=\{u\in V(G)\mid u\sim v\}$. The $i$-th largest distance Laplacian eigenvalue of $G$ is denoted by $\partial^L_i(G)$, whose multiplicity is denoted by $m(\partial^L_i(G))$. When it is clear from the context which graph $G$ we mean, we delete $G$ from the notations like $d_G(v_i,v_j)$, $N_G(v)$, $\partial^L_i(G)$ and $m(\partial^L_i(G))$. For a subset $S\subseteq V(G)$, let $G[S]$  denote the subgraph of $G$ induced by $S$.

As usual, we always write, respectively, $K_n$, $P_n$ and $C_n$ for the complete graph, the path and the cycle on $n$ vertices. For integers $a_1,a_2,\ldots,a_k\ge1$, let $K_{a_1,a_2,\ldots,a_k}$ denote the complete $k$-partite graph on $a_1+a_2+\cdots+a_k$ vertices. Let $G$ be a connected graph, denote by $\bar{G}$ the \emph{complement} of $G$, which is a graph with vertex set $V(\bar{G})=V(G)$ and two vertices are adjacent whenever they are not adjacent in $G$. Let $G_1=(V_1,E_1)$ and $G_2=(V_2,E_2)$ be two connected graphs, the \emph{(disjoint-)union} of $G_1$ and $G_2$ is the graph $G_1\cup G_2$, whose vertex set is $V_1\cup V_2$ and edge set is $E_1\cup E_2$. The \emph{join} of $G_1$ and $G_2$ is the graph $G_1\nabla G_2$, which is obtained from $G_1\cup G_2$ by joining each vertex of $G_1$ with every vertex of $G_2$. Moreover, we write $mG=\underbrace{G\cup G\cup \cdots\cup G}_m$ for an integer $m\ge2$.

At first, we introduce the famous Cauchy interlacing theorem.
\begin{thm}[\cite{Horn}]\label{thm-2-1}
Let $A$ be a real symmetric matrix of order $n$ with eigenvalues $\lambda_1(A)\ge\lambda_2(A)\ge\cdots\ge\lambda_n(A)$ and let $M$ be a principal submatrix of $A$ with order $m\le n$ and eigenvalues $\lambda_1(M)\ge\lambda_2(M)\ge\cdots\ge\lambda_m(M)$. Then $\lambda_i(A)\ge\lambda_i(M)\ge\lambda_{n-m+i}(A)$, for all $1\le i\le m$.
\end{thm}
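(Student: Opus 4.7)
The plan is to deduce the interlacing inequalities directly from the Courant--Fischer variational characterization of eigenvalues of a real symmetric matrix. Recall that for a real symmetric $n\times n$ matrix $A$ with ordered eigenvalues $\lambda_1(A)\ge\cdots\ge\lambda_n(A)$, one has
\[
\lambda_i(A)=\max_{\substack{S\subseteq\mathbb{R}^n\\ \dim S=i}}\ \min_{\substack{x\in S\\ \|x\|=1}} x^{\top}Ax
=\min_{\substack{T\subseteq\mathbb{R}^n\\ \dim T=n-i+1}}\ \max_{\substack{x\in T\\ \|x\|=1}} x^{\top}Ax.
\]
Since $M$ is a principal submatrix of $A$, after reindexing we may assume $M=A[I,I]$ for some index set $I\subseteq\{1,\dots,n\}$ with $|I|=m$. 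Let $\iota:\mathbb{R}^m\hookrightarrow\mathbb{R}^n$ be the natural isometric embedding that pads a vector with zeros in coordinates outside $I$. Then, crucially, for every $x\in\mathbb{R}^m$ we have $\|\iota(x)\|=\|x\|$ and $(\iota x)^{\top}A(\iota x)=x^{\top}Mx$. This is the only computational ingredient needed; the rest of the argument is purely a dimension-counting comparison of subspaces.

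For the upper bound $\lambda_i(M)\le\lambda_i(A)$, I would take any $i$-dimensional subspace $S\subseteq\mathbb{R}^m$ achieving the max-min for $M$, and consider its image $\iota(S)\subseteq\mathbb{R}^n$, which is also $i$-dimensional. Since the Rayleigh quotient is preserved by $\iota$, the min over the unit sphere of $\iota(S)$ equals the corresponding min over $S$. Hence $\lambda_i(M)$ is the min-of-Rayleigh-quotient over a particular $i$-dimensional subspace of $\mathbb{R}^n$, while $\lambda_i(A)$ is the maximum of such quantities over all $i$-dimensional subspaces; so $\lambda_i(M)\le\lambda_i(A)$.

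For the lower bound $\lambda_i(M)\ge\lambda_{n-m+i}(A)$, I would use the dual min-max form applied to $\lambda_{n-m+i}(A)$, writing it as the minimum over $(m-i+1)$-dimensional subspaces $T\subseteq\mathbb{R}^n$ of $\max_{x\in T,\|x\|=1}x^{\top}Ax$ (because $n-(n-m+i)+1=m-i+1$). Take a subspace $T_0\subseteq\mathbb{R}^m$ of dimension $m-i+1$ that realizes the min-max for $M$ giving value $\lambda_i(M)$; its image $\iota(T_0)$ is an $(m-i+1)$-dimensional subspace of $\mathbb{R}^n$ on which the Rayleigh quotient of $A$ agrees with that of $M$ on $T_0$. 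Therefore $\lambda_{n-m+i}(A)$, being a minimum over all such subspaces, is at most $\lambda_i(M)$.

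The argument is essentially routine once the variational principle is accepted, so the only place one must be careful is in matching dimensions correctly in the dual formulation (the index shift from $i$ to $n-m+i$), which is why I split the two inequalities and wrote out the dimension $n-(n-m+i)+1=m-i+1$ explicitly. No genuine obstacle is anticipated; the result will follow immediately once the embedding $\iota$ and the two forms of Courant--Fischer are in hand.
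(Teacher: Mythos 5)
Your proof is correct. The paper does not prove Theorem \ref{thm-2-1} at all --- it is quoted from Horn and Johnson's \emph{Matrix Analysis} as a known result --- and your argument via the two dual forms of the Courant--Fischer characterization, together with the zero-padding embedding $\iota$ and the dimension count $n-(n-m+i)+1=m-i+1$, is exactly the standard proof given in that reference; there is nothing to compare beyond noting that you have supplied the omitted details accurately.
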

Let $G$ be a graph on $n$ vertices, denote by $\mu_1\ge\mu_2\ge\cdots\ge\mu_{n-1}\ge\mu_n=0$ the Laplacian eigenvalues of $G$ and $m(\mu_i)$ the multiplicity of $\mu_i$. There are many pretty properties for Laplacian eigenvalues.
\begin{lem}[\cite{Cvet}]\label{lem-2-1}
Let $G$ be a graph on $n$ vertices with Laplacian eigenvalues $\mu_1\ge\mu_2\ge\cdots\ge\mu_{n-1}\ge\mu_n=0$. Then we have the following results.\\
(i) Denote by $m(0)$ the multiplicity of $0$ as a Laplacian eigenvalue and $w(G)$ the number of connected components of $G$. Then $w(G)=m(0)$.\\
(ii) $G$ has exactly two distinct Laplacian eigenvalues if and only if $G$ is a union of complete graphs of the same order and isolate vertices.\\
(iii) The Laplacian eigenvalues of $\bar{G}$ are given by $\mu_i(\bar{G})=n-\mu_{n-i}$ for $i=1,2,\ldots,n-1$ and $\mu_n(\bar{G})=0$.\\
(iv) Let $H$ be a graph on $m$ vertices with Laplacian eigenvalues $\mu_1'\ge\mu_2'\ge\cdots\ge\mu_m'=0$, then the Laplacian spectrum of $G\nabla H$ is \[\{n+m,m+\mu_1,m+\mu_2,\ldots,m+\mu_n,n+\mu_1',n+\mu_2',
\ldots,n+\mu_m',0\}.\]
\end{lem}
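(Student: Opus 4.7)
My plan is to treat the four assertions separately, each by a standard spectral argument anchored on the positive-semidefinite quadratic form $x^\top L(G)x=\sum_{ij\in E}(x_i-x_j)^2$.

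For (i), this quadratic form shows that $Lx=0$ iff $x$ is constant on each component; hence the kernel of $L(G)$ has as a basis the characteristic vectors of the components, proving $m(0)=w(G)$. For (ii), I decompose $G$ into its connected components so that $\mathrm{Spec}(L(G))$ is the multiset union of the $\mathrm{Spec}(L(G_i))$. If the distinct Laplacian eigenvalues of $G$ are exactly $0$ and $\mu$, then each component is either an isolated vertex or a connected graph with a single nonzero eigenvalue $\mu$, in which case $L(G_i)(L(G_i)-\mu I)=0$. Reading the $(u,v)$-entry of this matrix identity for a non-adjacent pair $u\neq v$ yields $|N(u)\cap N(v)|=0$; combined with connectedness of $G_i$, this forces the diameter of $G_i$ to be at most one, i.e.\ $G_i\cong K_{n_i}$. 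The common nonzero eigenvalue $\mu=n_i$ then forces all non-trivial components to have the same order.

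For (iii), I use the identity $L(G)+L(\bar G)=nI-J$, which follows from $A(G)+A(\bar G)=J-I$ and complementary degrees. Since $\mathbf{1}$ is in the kernel of both matrices, restricting to $\mathbf{1}^\perp$ gives $L(G)+L(\bar G)=nI$, so the two matrices are simultaneously diagonalisable there with eigenvalues summing to $n$. Pairing the nonzero eigenvalues of $L(G)$ in decreasing order with those of $L(\bar G)$ in increasing order delivers $\mu_i(\bar G)=n-\mu_{n-i}$ for $1\le i\le n-1$.

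For (iv), I write the Laplacian of the join in block form as
\[
L(G\nabla H)=\begin{pmatrix} L(G)+mI_n & -J_{n\times m}\\ -J_{m\times n} & L(H)+nI_m\end{pmatrix},
\]
and lift eigenvectors block-by-block. Any Laplacian eigenvector $u$ of $G$ orthogonal to $\mathbf{1}_n$, with eigenvalue $\mu_i$, lifts to $(u,0)$, an eigenvector of $L(G\nabla H)$ with eigenvalue $\mu_i+m$, since $J_{m\times n}u=0$; the symmetric construction on the $H$-side contributes the eigenvalues $\mu'_j+n$. The remaining two eigenvalues come from the invariant two-dimensional subspace spanned by $(\mathbf{1}_n,0)$ and $(0,\mathbf{1}_m)$, where the restricted operator has trace $n+m$ and determinant $0$, giving eigenvalues $0$ and $n+m$. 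A dimension count confirms these are all $n+m$ eigenvalues, matching the claimed spectrum. The only step that is not pure bookkeeping is the characterisation in (ii); everything else amounts to direct manipulations of the Laplacian identities.
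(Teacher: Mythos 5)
The paper gives no proof of this lemma; it is quoted from the textbook \cite{Cvet} as a collection of known facts, so there is nothing internal to compare against. Your four arguments are the standard ones and are all correct: the quadratic-form description of the kernel for (i), the minimal-polynomial/common-neighbour argument for (ii) (you only spell out the forward implication, but the converse is the trivial computation of the spectrum of $kK_a\cup \ell K_1$), the identity $L(G)+L(\bar G)=nI-J$ restricted to $\mathbf{1}^\perp$ for (iii), and the block decomposition of the join for (iv). One small observation: your dimension count in (iv) actually corrects an indexing slip in the statement as printed, which lists $m+\mu_i$ for $i=1,\dots,n$ and $n+\mu_j'$ for $j=1,\dots,m$ together with $n+m$ and $0$, giving $n+m+2$ entries; the correct spectrum, which your proof produces, is $\{n+m,0\}\cup\{m+\mu_i: 1\le i\le n-1\}\cup\{n+\mu_j': 1\le j\le m-1\}$.
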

With respect to distance Laplacian eigenvalues, there are some similar results. The following results are given by Aouchiche and Hansen.
\begin{thm}[\cite{Aouchiche-2}]\label{thm-2-2}
Let $G$ be a connected graph on $n$ vertices with $d(G)\le 2$. Let $\mu_1\ge\mu_2\ge\cdots\ge\mu_{n-1}\ge\mu_n=0$ be the Laplacian spectrum of $G$. Then the distance Laplacian spectrum of $G$ is $2n-\mu_{n-1}\ge 2n-\mu_{n-2}\ge\cdots\ge2n-\mu_1\ge\partial^L_n=0$. Moreover, for every $i\in\{1, 2,\ldots, n-1\}$ the eigenspaces corresponding to $\mu_i$ and to $2n-\mu_i$ are the same.
\end{thm}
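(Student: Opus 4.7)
The plan is to reduce the statement to a clean matrix identity relating $\mathcal{D}^L(G)$ to the ordinary Laplacian $L(G)$ under the hypothesis $d(G)\le 2$. The critical observation is that when every pair of distinct vertices is at distance $1$ or $2$, the distance matrix can be written in closed form as $\mathcal{D}(G)=2(J-I)-A(G)$, where $A(G)$ is the adjacency matrix, $J$ is the all-ones matrix and $I$ the identity: one checks the three cases $u=v$, $u\sim v$, $u\not\sim v$ separately.

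From this I would then compute the transmissions: for each vertex $v_i$ of degree $d_i$, exactly $d_i$ of the other vertices are at distance $1$ and the remaining $n-1-d_i$ are at distance $2$, giving $Tr(v_i)=2(n-1)-d_i$. In matrix form this is $Tr(G)=2(n-1)I-D$, where $D=\mathrm{diag}(d_1,\ldots,d_n)$ is the degree matrix. Subtracting and using $L=D-A$, a short calculation yields the key identity
\[
\mathcal{D}^L(G)\;=\;Tr(G)-\mathcal{D}(G)\;=\;2nI-2J-L.
\]

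The spectral statement now follows by a standard decomposition argument. Since $G$ is connected, $L$ has the simple eigenvalue $0$ with eigenvector $\mathbf{1}$, and every other Laplacian eigenvector $x$ (for eigenvalue $\mu_i$ with $1\le i\le n-1$) is orthogonal to $\mathbf{1}$, so $Jx=0$. Applying $\mathcal{D}^L(G)=2nI-2J-L$ to such an $x$ gives $\mathcal{D}^L(G)x=(2n-\mu_i)x$, showing that $x$ is a $\mathcal{D}^L$-eigenvector with eigenvalue $2n-\mu_i$ and establishing the eigenspace equality claim. Applying the same identity to $\mathbf{1}$ gives $\mathcal{D}^L(G)\mathbf{1}=2n\mathbf{1}-2n\mathbf{1}-0=0$, accounting for the trivial eigenvalue $0$. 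Listing these $n$ eigenvalues in non-increasing order (which reverses the order of the $\mu_i$) yields exactly the spectrum stated.

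This proof is essentially algebraic with no real obstacle; the only thing one must be careful about is the bookkeeping in deriving $\mathcal{D}^L(G)=2nI-2J-L$ (in particular the constant term $2(n-1)+2=2n$), and the verification that the $n-1$ eigenvectors orthogonal to $\mathbf{1}$ together with $\mathbf{1}$ itself span all of $\mathbb{R}^n$, so that one has accounted for every eigenvalue of $\mathcal{D}^L(G)$.
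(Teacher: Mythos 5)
Your proof is correct and is the standard argument: the paper states this result as a citation to Aouchiche and Hansen without reproducing a proof, and the identity $\mathcal{D}^L(G)=2nI-2J-L$ for diameter at most $2$, combined with the simultaneous diagonalization of $L$ and $J$ on $\mathbf{1}$ and its orthogonal complement, is exactly how the cited source establishes it. Nothing to add.
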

\begin{thm}[\cite{Aouchiche-2}]\label{thm-2-3} Let $G$ be a connected graph on $n$ vertices. Then $\partial^L_{n-1}\ge n$ and $\partial^L_{n-1}=n$ if
and only if $\bar{G}$ is disconnected. Furthermore, the multiplicity of $n$ as a distance Laplacian eigenvalue is one less than the number of connected components of $\bar{G}$.
\end{thm}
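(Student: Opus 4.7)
The plan is to prove all three assertions via a Rayleigh-quotient computation combined with a structural analysis of the equality case.

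First, observe that $\mathcal{D}^L(G)\mathbf{1} = 0$ (each row sums to zero by construction of $\mathcal{D}^L$), so $\mathbf{1}$ spans the null space and by the variational characterization,
\[
\partial^L_{n-1}(G) \;=\; \min_{0\neq x\,\perp\,\mathbf{1}} \frac{x^{T}\mathcal{D}^L(G)\,x}{x^{T}x}.
\]
Expanding the quadratic form gives $x^{T}\mathcal{D}^L(G)\,x = \sum_{i<j} d_G(v_i,v_j)(x_i-x_j)^2$. Since every distance satisfies $d_G(v_i,v_j)\geq 1$ and, for $x\perp \mathbf{1}$, $\sum_{i<j}(x_i-x_j)^2 = n\|x\|^2 - (\mathbf{1}^{T} x)^2 = n\|x\|^2$, I immediately obtain $\partial^L_{n-1}(G)\geq n$.

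Next, for the equality characterization, suppose $\partial^L_{n-1}(G) = n$ and let $x\perp\mathbf{1}$ be a corresponding eigenvector. Equality in the Rayleigh bound forces $d_G(v_i,v_j) = 1$ whenever $x_i \neq x_j$. Letting $L_1,\ldots,L_k$ be the level sets of $x$ (with $k\geq 2$ since $x\neq 0$ and $x\perp\mathbf{1}$), every pair across distinct $L_a,L_b$ is adjacent in $G$, so $\bar G$ has no edges between different level sets and is therefore disconnected. Conversely, if $\bar G$ has components $U_1,\ldots,U_c$, then $G = G[U_1]\nabla\cdots\nabla G[U_c]$, so any two vertices in distinct components have distance $1$ in $G$; the vectors $y^{(j)} = |U_j|\chi_{U_1} - |U_1|\chi_{U_j}$ for $j=2,\ldots,c$ are orthogonal to $\mathbf{1}$, linearly independent, and by the same Rayleigh computation saturate the bound, hence are eigenvectors for eigenvalue $n$. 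This proves the biconditional.

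Finally, for the multiplicity claim, the construction above gives $m(n)\geq c-1$. For the matching upper bound I would reuse the equality analysis: any eigenvector $x\perp\mathbf{1}$ for eigenvalue $n$ must have level sets that are unions of components of $\bar G$, so $x$ is constant on each $U_i$ and hence lies in the span of $\chi_{U_1},\ldots,\chi_{U_c}$; intersecting this $c$-dimensional subspace with $\mathbf{1}^\perp$ leaves dimension $c-1$, forcing $m(n) = c-1$. The main obstacle is this equality analysis: the critical step is extracting, from the single condition "$d_G(v_i,v_j) = 1$ whenever $x_i \neq x_j$", both the disconnectedness of $\bar G$ and the exact dimension of the eigenspace. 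Once this level-set argument is in place, the rest is straightforward Rayleigh-quotient manipulation.
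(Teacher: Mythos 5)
The paper does not prove this statement at all: Theorem \ref{thm-2-3} is quoted from Aouchiche and Hansen \cite{Aouchiche-2} and used as a black box, so there is no in-paper argument to compare yours against. Judged on its own, your proof is correct and complete. The identity $x^{T}\mathcal{D}^L(G)x=\sum_{i<j}d_G(v_i,v_j)(x_i-x_j)^2$ is valid because $\mathcal{D}^L$ is the Laplacian of the complete graph weighted by distances, and since all off-diagonal weights are at least $1$ you get $x^{T}\mathcal{D}^L(G)x\ge\sum_{i<j}(x_i-x_j)^2=n\|x\|^2$ on $\mathbf{1}^{\perp}$, hence $\partial^L_{n-1}\ge n$. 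Your equality analysis is the right mechanism: $\sum_{i<j}(d_{ij}-1)(x_i-x_j)^2=0$ forces $x$ to be constant on each component of $\bar G$ (vertices nonadjacent in $G$ have distance at least $2$, so must receive equal values), which simultaneously yields the disconnectedness of $\bar G$ in the forward direction and the upper bound $m(n)\le c-1$; the vectors $y^{(j)}$ supply the matching lower bound. The only step you might spell out is that a vector attaining the minimum of the Rayleigh quotient over the invariant subspace $\mathbf{1}^{\perp}$ is genuinely an eigenvector of $\mathcal{D}^L$ for that minimal value --- standard, since $\mathbf{1}^{\perp}$ is $\mathcal{D}^L$-invariant. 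Note also that your upper-bound argument already handles the case where $\bar G$ is connected (any eigenvector for $n$ would then be a multiple of $\mathbf{1}$, hence zero), so the multiplicity formula $m(n)=c-1$ holds in all cases, which is exactly what the theorem asserts. This is essentially the same decomposition $\mathcal{D}^L(G)=L(K_n)+\sum_{i<j}(d_{ij}-1)L(\{i,j\})$ underlying the original proof, so nothing is missing.
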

\begin{thm}[\cite{Aouchiche-2}]\label{thm-2-4} Let $G$ be a connected graph on $n$ vertices and $m\ge n$ edges. Consider the connected graph $G'$ obtained from $G$ by the deletion of an edge. Let $\partial^L_1,\partial^L_2,\ldots,\partial^L_n$ and $\partial'^L_1,\partial'^L_2,\ldots,\partial'^L_n$ denote the distance Laplacian spectra of $G$ and $G'$ respectively. Then $\partial'^L_i\ge \partial^L_i$ for all $i=1,\ldots,n$.
\end{thm}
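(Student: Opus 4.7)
The plan is to realize the matrix difference $\mathcal{D}^L(G')-\mathcal{D}^L(G)$ as the Laplacian of a suitable edge-weighted graph on $V(G)$, and then invoke Weyl's eigenvalue inequality. The hypothesis $m\ge n$ only serves to guarantee the existence of a non-bridge edge, so that the resulting $G'$ is connected; the actual spectral argument uses nothing beyond the fact that $G'$ is a connected spanning subgraph of $G$.

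The first step would be the elementary distance monotonicity: since every walk in $G'$ is also a walk in $G$, we have $d_{G'}(u,v)\ge d_G(u,v)$ for all $u,v\in V(G)$. Setting $w(u,v):=d_{G'}(u,v)-d_G(u,v)\ge 0$, the definition of transmission gives $Tr_{G'}(v)-Tr_G(v)=\sum_{u\ne v}w(u,v)$. From this I would assemble $M:=\mathcal{D}^L(G')-\mathcal{D}^L(G)$ and simply read off that its off-diagonal $(u,v)$-entry equals $-w(u,v)\le 0$, its diagonal $(v,v)$-entry equals $\sum_{u\ne v}w(u,v)\ge 0$, and consequently its row sums vanish.

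At that point $M$ is manifestly the Laplacian matrix of the weighted graph on $V(G)$ with edge weights $w(u,v)$, so it is positive semidefinite (its quadratic form is $\sum_{u<v}w(u,v)(x_u-x_v)^2\ge 0$). Writing $\mathcal{D}^L(G')=\mathcal{D}^L(G)+M$ with $M\succeq 0$ and applying Weyl's inequality (or equivalently Theorem \ref{thm-2-1} with $A=\mathcal{D}^L(G')$ and $A-M=\mathcal{D}^L(G)$ as the compared matrix, via a standard perturbation argument) immediately gives $\partial'^L_i\ge\partial^L_i$ for every $i$, which is exactly what is claimed.

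I do not see a genuine obstacle in this argument; the only place requiring care is the zero row-sum identity for $M$, which is a direct consequence of the definition of transmission together with the pointwise distance inequality. Once that bookkeeping is done, the weighted-Laplacian reinterpretation of $M$ and Weyl's inequality close the proof in one stroke.
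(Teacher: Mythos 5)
The paper does not prove Theorem \ref{thm-2-4}; it is quoted from \cite{Aouchiche-2} without proof. Your argument is correct and is essentially the standard (original) one: distance monotonicity under edge deletion makes $M=\mathcal{D}^L(G')-\mathcal{D}^L(G)$ the Laplacian of a nonnegatively weighted graph, hence positive semidefinite, and Weyl's monotonicity inequality yields $\partial'^L_i\ge\partial^L_i$. The only small caveat is your parenthetical suggesting Theorem \ref{thm-2-1} (Cauchy interlacing for principal submatrices) as an ``equivalent'' route --- that theorem does not apply to a PSD perturbation of a full-size matrix; Weyl's inequality is the correct tool, and since you invoke it primarily, the proof stands.
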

A graph $G$ is said to be a \emph{cograph} if it contains no induced $P_4$. There's a pretty result about cographs.
\begin{lem}[\cite{Corneil}]\label{lem-2-2}
Given a graph $G$, the following statements are equivalent:\\
1) $G$ is a cograph.\\
2) The complement of any connected subgraph of $G$ with at least two vertices is
disconnected.\\
3) Every connected induced subgraph of $G$ has diameter less than or equal to $2$.
\end{lem}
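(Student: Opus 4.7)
The plan is to prove the three-way equivalence cyclically, exploiting two classical features of $P_4$: it is self-complementary, namely $\overline{P_4}=P_4$, and the property of being $P_4$-free is hereditary under induced subgraphs. I will establish $(1)\Leftrightarrow(3)$ and $(1)\Leftrightarrow(2)$, which together give the full statement.

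For $(1)\Rightarrow(3)$, take any connected induced subgraph $H$ of $G$ and suppose $d(H)\ge 3$. Pick $u,v\in V(H)$ with $d_H(u,v)=3$ and a shortest $u$--$v$ path $x_0x_1x_2x_3$; shortest-path minimality forbids any chord among the $x_i$, so $\{x_0,x_1,x_2,x_3\}$ induces a $P_4$ in $H$ and hence in $G$, contradicting $(1)$. The reverse $(3)\Rightarrow(1)$ is immediate by contrapositive, since an induced $P_4$ in $G$ is itself a connected induced subgraph of diameter $3$. Likewise $(2)\Rightarrow(1)$ is the contrapositive: an induced $P_4$ in $G$ is a connected induced subgraph on $\ge 2$ vertices whose complement $\overline{P_4}=P_4$ is still connected, contradicting $(2)$.

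The main work lies in $(1)\Rightarrow(2)$. Let $H$ be any connected induced subgraph of $G$ with $|V(H)|\ge 2$; I want to show $\bar H$ is disconnected. I would argue by induction on $|V(H)|$. The base case $|V(H)|=2$ gives $H=K_2$ and $\bar H\cong 2K_1$. For the inductive step the cornerstone is the join-decomposition claim: every connected cograph on $\ge 2$ vertices can be written as $H=H_1\nabla H_2$ with $H_1,H_2$ non-empty, so that $\bar H=\overline{H_1}\cup\overline{H_2}$ is immediately disconnected. To establish this decomposition I would argue by contradiction. Assume $\bar H$ is also connected. Since $\overline{P_4}=P_4$, the graph $\bar H$ is itself $P_4$-free, and by the argument already used for $(1)\Rightarrow(3)$ (which in fact shows that every connected $P_4$-free graph has diameter $\le 2$), both $H$ and $\bar H$ have diameter at most $2$. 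A neighborhood analysis about any edge $uv$ of $\bar H$ — equivalently, a non-edge $uv$ of $H$ joined by a common neighbor $w$ in $H$ — then exploits the $P_4$-freeness to force an induced $P_4$ spanning $u,v,w$ and a fourth carefully chosen vertex, the desired contradiction.

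This final case analysis, which effectively recovers the structural theorem that connected cographs are exactly the joins, is the main technical obstacle; everything preceding it is routine. An alternative plan that avoids the explicit neighborhood case split would be to develop modular decomposition machinery and invoke the characterization of cographs as exactly the graphs whose modular decomposition tree has no prime nodes, but for a self-contained write-up the direct inductive join-decomposition route is cleaner.
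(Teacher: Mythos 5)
The paper does not prove this lemma at all; it is quoted verbatim from the cited reference [Corneil], so there is no internal proof to compare against. Judged on its own terms, your proposal is incomplete. The implications $(1)\Leftrightarrow(3)$ and $(2)\Rightarrow(1)$ are correctly and fully argued (a shortest path of length $3$ is chordless, hence an induced $P_4$; and $\overline{P_4}=P_4$). But the entire content of the lemma sits in $(1)\Rightarrow(2)$, i.e.\ in the claim that a $P_4$-free graph on at least two vertices cannot have both itself and its complement connected, and that is exactly the step you do not carry out. You reduce it to a ``neighborhood analysis about any edge $uv$ of $\bar H$'' that will ``force an induced $P_4$ spanning $u,v,w$ and a fourth carefully chosen vertex,'' and you yourself label this the main technical obstacle. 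As sketched, it does not work: for an arbitrary non-edge $uv$ of $H$ with common neighbor $w$, there need not exist any fourth vertex completing an induced $P_4$ (the vertex $w$ could be adjacent to everything, for instance), so the choice of $u,v,w$ cannot be arbitrary and no rule for choosing them is given. A proof by gesture at ``$P_4$-freeness forces a contradiction'' is a genuine gap here, not a routine verification.

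The standard way to close it (Seinsche's argument) is a vertex-deletion induction rather than a local analysis around one non-edge: assuming $H$ and $\bar H$ are both connected, delete a vertex $v$ and apply induction to conclude that $H-v$ or $\overline{H-v}$ is disconnected; by self-complementarity of $P_4$ assume $H-v$ has components $C_1,\dots,C_k$ with $k\ge 2$. Connectedness of $H$ gives $v$ a neighbor in each $C_i$; connectedness of $\bar H$ gives $v$ a non-neighbor, say in $C_i$, and then an edge $xy$ inside $C_i$ with $v\sim x$, $v\not\sim y$ together with a neighbor $z$ of $v$ in $C_j$ ($j\ne i$) yields the induced path $y\,x\,v\,z$. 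Your planned induction on $|V(H)|$ is the right scaffolding, but the inductive hypothesis must be used \emph{inside} the proof of the join decomposition in this way; as written, your base case and join-decomposition claim never actually invoke the induction. Separately, you were right to read condition 2) as referring to \emph{induced} subgraphs (as the source [Corneil] does); taken literally with arbitrary subgraphs the statement is false, e.g.\ $K_4$ is a cograph containing a spanning (non-induced) $P_4$ with connected complement, and it would have been worth flagging that.
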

\section{Main results}\label{se-3}
Recall that $\mathcal{G}(n,k)=\{G\in \mathcal{G}(n)\mid m(\partial^L_1)=k\}$. Aouchiche and Hansen \cite{Aouchiche} proved that $\mathcal{G}(n,n-1)=\{K_n\}$. Recently, Celso et al. \cite{Celso} proved that $\mathcal{G}(n,n-2)=\{K_{1,n-1},K_{n/2,n/2}\}$. They also made efforts to characterize $\mathcal{G}(n,n-3)$. Though they did not give a complete characterization, their ideas are enlightening. Especially, they proved that the graphs in $\mathcal{G}(n,n-3)$ contain no induced $P_5$.
\begin{lem}[\cite{Celso}, Theorem 4.1]\label{lem-3-1}
Let $G\in \mathcal{G}(n,n-3)$ with $n\ge 5$ then $G$ does not contain induced $P_5$.
\end{lem}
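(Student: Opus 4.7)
The plan is a proof by contradiction. Suppose $G\in\mathcal{G}(n,n-3)$ with $n\ge 5$ contains an induced $P_5$, and label its vertices $v_1,v_2,v_3,v_4,v_5$ in path order. The strategy is to use Cauchy's interlacing theorem (Theorem~\ref{thm-2-1}) to push the high multiplicity of $\partial^L_1$ in $\mathcal{D}^L(G)$ down to the $5\times 5$ principal submatrix $M$ indexed by the induced $P_5$, and then to contradict what that submatrix is forced to look like.

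Since $\partial^L_1$ has multiplicity $n-3$ in $\mathcal{D}^L(G)$, the interlacing inequalities with $m=5$ give $\lambda_1(M)=\lambda_2(M)=\partial^L_1$; equivalently, $\mathrm{rank}(M-\partial^L_1 I)\le 3$. The off-diagonal entries of $M$ are rigidly constrained by the induced-$P_5$ condition: we must have $d_G(v_i,v_{i+1})=1$ and $d_G(v_i,v_{i+2})=2$, while the only undetermined off-diagonal quantities are $d_G(v_1,v_4),d_G(v_2,v_5)\in\{2,3\}$ and $d_G(v_1,v_5)\in\{2,3,4\}$. The $P_5$-automorphism $v_i\leftrightarrow v_{6-i}$ halves the number of effective off-diagonal patterns.

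For each admissible off-diagonal pattern of $M$, I would convert the rank bound $\mathrm{rank}(M-\partial^L_1 I)\le 3$ into linear relations by testing against vectors that must lie in $\ker(M-\partial^L_1 I)$. Natural test vectors include $\mathbf{1}_5$, which couples $\partial^L_1$ and the transmissions $Tr(v_i)$ to the external distance sums $\sum_{w\notin\{v_1,\ldots,v_5\}}d_G(v_i,w)$, together with antisymmetric vectors of the form $e_{v_1}-e_{v_5}+\alpha(e_{v_2}-e_{v_4})$ that exploit the path symmetry. Theorem~\ref{thm-2-3} then supplies $\partial^L_{n-1}\ge n$ and pins down the three small eigenvalues of $\mathcal{D}^L(G)$, and hence the third eigenvalue of $M$; Theorem~\ref{thm-2-4} (edge-deletion monotonicity) acts as a safety net for ruling out borderline configurations where an equality looks suspicious.

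The main obstacle will be keeping the case analysis in control. In each of the few surviving distance configurations, the transmissions $Tr(v_i)$ remain unknown global quantities, so the contradiction must arise from polynomial relations that are insensitive to the unknowns beyond the mild inequalities forced by the $P_5$. My expectation is that, rather than expanding all $4\times 4$ minors of $M-\partial^L_1 I$, a single well-chosen test vector produces a linear inconsistency in each case; symmetry combined with Theorems~\ref{thm-2-3} and \ref{thm-2-4} should make each case collapse to a short calculation.
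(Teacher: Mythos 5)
First, note that the paper does not prove this statement at all: Lemma~\ref{lem-3-1} is imported verbatim from Celso et al.\ (their Theorem~4.1), so there is no in-paper proof to match. Your overall strategy is nevertheless the natural one, and it is exactly the machinery this paper builds in Lemma~\ref{lem-3-5} and deploys in Theorem~\ref{thm-3-1} and Lemma~\ref{lem-x-3-7}: interlacing forces $\partial^L_1$ to be an eigenvalue of multiplicity at least two of the $5\times5$ principal submatrix indexed by the induced $P_5$, the off-diagonal entries of that submatrix are determined up to the three distances $d(v_1,v_4),d(v_2,v_5)\in\{2,3\}$ and $d(v_1,v_5)\in\{2,3,4\}$, and one then extracts eigenvector relations case by case. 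So the skeleton is sound.

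However, as written this is a plan, not a proof, and it has two concrete defects. First, $\mathbf{1}_5$ does \emph{not} lie in $\ker(M-\partial^L_1 I)$; the correct statement (the content of Lemma~\ref{lem-3-5}) is that every $\partial^L_1$-eigenvector of $M$ extends by zeros to a $\partial^L_1$-eigenvector of $\mathcal{D}^L(G)$ and is therefore \emph{orthogonal} to the all-ones vector, i.e.\ has zero entry sum. Using $\mathbf{1}_5$ as a kernel vector would produce false relations. Second, the case analysis cannot be expected to ``collapse to $x=0$'' from the eigenvector equations alone: in the paper's entirely analogous computations (e.g.\ for $M_1'$, $M_2$, $M_3$) the equations instead yield identities of the form $\partial^L_1=Tr(v_i)+c$ with $c\in\{0,1,2\}$, and these are only refuted via Lemma~\ref{lem-3-2} ($\partial^L_1>\max_v Tr(v)+1$ for $G\ne K_n$) and, for $c=2$, the integrality of $\partial^L_1$ (Lemma~\ref{lem-3-3}, valid only for $n\ge6$) plus a transmission comparison. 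Your proposal invokes Theorems~\ref{thm-2-3} and~\ref{thm-2-4} instead, and neither supplies this lower bound; without Lemma~\ref{lem-3-2} (and a separate treatment of $n=5$, where integrality is unavailable and $G$ may equal $P_5$ itself) several distance configurations will survive your argument. Until each of the admissible configurations is actually driven to a contradiction, the proof is incomplete.
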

\begin{remark}\label{re-1}
If $G$ does not contain induced $P_5$, then $d(G)\le 3$. Note that $K_n\not\in\mathcal{G}(n,n-3)$. We obtain that $d(G)=2$ or $d(G)=3$ for any graph $G\in\mathcal{G}(n,n-3)$ with $n\ge5$.
\end{remark}
\begin{lem}[\cite{Celso}, Theorem 3.3]\label{lem-3-2}
If $G$ is a connected graph then $\partial^L_1\ge\max_{v\in V(G)}Tr(v)+1$ with equality holds if and only if $G\cong K_n$.
\end{lem}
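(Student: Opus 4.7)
The plan is to extract the bound from a Rayleigh quotient estimate for $\mathcal{D}^L(G)$, exploiting the fact that the all-ones vector $\mathbf{1}$ spans the kernel of $\mathcal{D}^L(G)$. Pick a vertex $v$ with maximum transmission $T = Tr(v) = \max_{u \in V(G)} Tr(u)$, and take the test vector $x = e_v - \frac{1}{n}\mathbf{1}$, chosen precisely so that $x$ is orthogonal to $\mathbf{1}$. Because $\mathcal{D}^L(G)\mathbf{1}=0$, the numerator of the Rayleigh quotient collapses to $x^{T}\mathcal{D}^L(G)x = e_v^{T}\mathcal{D}^L(G)e_v = Tr(v)=T$; a direct computation gives $\|x\|^{2} = (n-1)/n$. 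The variational characterization of the largest eigenvalue then yields
\[
\partial^L_1 \ \ge\ \frac{x^{T}\mathcal{D}^L(G)x}{\|x\|^{2}} \ =\ \frac{nT}{n-1} \ =\ T + \frac{T}{n-1}.
\]
To close the gap I would invoke the elementary lower bound $T \ge n-1$, which holds because each of the $n-1$ other vertices contributes at least $1$ to $Tr(v)$. This upgrades the estimate to $\partial^L_1 \ge T + 1$, as required.

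For the equality discussion, observe that $\partial^L_1 = T + 1$ forces $T/(n-1) = 1$, i.e.\ $T = n-1$. Since $v$ has the maximum transmission, every vertex then has transmission at most $n-1$; combined with the universal lower bound $Tr(u)\ge n-1$, every vertex of $G$ has transmission exactly $n-1$, hence is adjacent to all others. Therefore $G \cong K_n$. Conversely, $\mathcal{D}^L(K_n) = nI_n - J_n$ has largest eigenvalue $n = (n-1) + 1 = Tr(v) + 1$, confirming equality.

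The only genuinely creative step here is the choice of test vector: the naive $x = e_v$ only delivers $\partial^L_1 \ge T$ and loses the crucial additive $1$, whereas the centering $-\frac{1}{n}\mathbf{1}$ converts the Rayleigh denominator from $1$ into $(n-1)/n$, and the resulting factor $n/(n-1)$, combined with $T \ge n-1$, is exactly what manufactures the extra $+1$. I do not foresee any further obstacles, since the equality analysis reduces to the already-used transmission bound.
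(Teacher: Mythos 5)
The paper itself gives no proof of this lemma --- it is quoted from Celso et al.\ [Celso, Theorem 3.3] --- so there is no internal argument to compare against; judged on its own, your proof is correct and complete. The computation $x^{T}\mathcal{D}^L(G)x=(\mathcal{D}^L)_{vv}=Tr(v)$ and $\|x\|^2=(n-1)/n$ for $x=e_v-\tfrac1n\mathbf{1}$ is right (you only need $\mathcal{D}^L(G)\mathbf{1}=0$, not that $\mathbf{1}$ spans the kernel), the bound $Tr(v)\ge n-1$ legitimately upgrades $\partial^L_1\ge \tfrac{n}{n-1}T$ to $\partial^L_1\ge T+1$, and the equality analysis correctly squeezes $T=n-1$, forces all transmissions to equal $n-1$, and hence yields $K_n$, with the converse checked via $\mathcal{D}^L(K_n)=nI-J$. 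This centering trick is the standard route to such bounds and, as far as I can tell, is essentially the argument in the cited source. (The statement implicitly assumes $n\ge 2$, as does your use of $T/(n-1)$, but that is a triviality.)
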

\begin{lem}\label{lem-3-3}
Let $G\in\mathcal{G}(n,n-3)$ with $n\ge 6$, then $\partial^L_1$ is integral.
\end{lem}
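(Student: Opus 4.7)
The plan is to exploit integrality of the matrix $\mathcal{D}^L(G)$: its characteristic polynomial $p(x)$ lies in $\mathbb{Z}[x]$, so $\partial^L_1$ is automatically an algebraic integer. Hence the task reduces to showing that $\partial^L_1$ is rational. The key structural observation about the spectrum is that, since $m(\partial^L_1)=n-3$ exactly, the three remaining eigenvalues of $\mathcal{D}^L(G)$ consist of $\partial^L_{n-2}\ge\partial^L_{n-1}$, both strictly less than $\partial^L_1$, together with the simple eigenvalue $\partial^L_n=0$ (simple by Theorem~\ref{thm-2-3}, since $G$ is connected).

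I would then argue by contradiction. Suppose the minimal polynomial $m(x)\in\mathbb{Z}[x]$ of $\partial^L_1$ over $\mathbb{Q}$ has degree $d\ge 2$. Writing $p(x)$ as a product of powers of distinct $\mathbb{Q}$-irreducible polynomials, the multiplicity of $\partial^L_1$ as a root of $p$ is exactly the exponent with which $m(x)$ appears, so $m(x)^{n-3}$ divides $p(x)$. In particular, every Galois conjugate of $\partial^L_1$ is an eigenvalue of $\mathcal{D}^L(G)$ with multiplicity at least $n-3$.

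Now pick a conjugate $\beta\neq\partial^L_1$. If $\beta=0$ then $x\mid m(x)$ forces $m(x)=x$, contradicting $d\ge 2$; hence $\beta\in\{\partial^L_{n-2},\partial^L_{n-1}\}$, so the multiplicity of $\beta$ as an eigenvalue is at most $2$. But we just established that this multiplicity is at least $n-3\ge 3$ under the hypothesis $n\ge 6$, a contradiction. Thus $d=1$, i.e., $\partial^L_1$ is rational, and therefore an integer.

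The only delicate point is the Galois-multiplicity observation that irreducible factors of an integer polynomial appear with a uniform multiplicity at all their roots; everything else is pigeonholing the conjugates of $\partial^L_1$ into the two available slots $\partial^L_{n-2},\partial^L_{n-1}$. The hypothesis $n\ge 6$ enters precisely to make $n-3\ge 3$ strictly exceed this available room, so I would expect the proof not to extend to $n=5$ without a separate ad hoc argument.
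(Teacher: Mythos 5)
Your proposal is correct and follows essentially the same route as the paper: both pass to the minimal polynomial of $\partial^L_1$, note that $(p(x))^{n-3}$ divides the characteristic polynomial so any Galois conjugate $\beta\neq 0$ would be an eigenvalue of multiplicity $n-3$, and derive a contradiction by counting (the paper via $2(n-3)\le n-1$, you via pigeonholing $\beta$ into the two slots $\partial^L_{n-2},\partial^L_{n-1}$ — the same estimate). Your closing remark that the argument breaks down at $n=5$ matches the paper's hypothesis $n\ge 6$ and its separate treatment of $n=4,5$ by computer search.
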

\begin{proof}
Let $f(x)$ be the characteristic polynomial of $\mathcal{D}^L(G)$. As $\mathcal{D}^L(G)$ only contains integral entries, we obtain that $f(x)$ is a monic polynomial with integral coefficients. Let $p(x)$ be the minimal polynomial of $\partial^L_1$, then $p(x)\in Z[x]$ is irreducible in $Q[x]$ and $(p(x))^{n-3}|f(x)$. We assume that $p(x)$ is a polynomial of degree at least $2$. Therefore, $p(x)$ has another root $\partial\ne0$, which is also a distance Laplacian eigenvalue of $G$ with multiplicity $n-3$. It leads to that $n\le2(n-3)\le n-1$, a contradiction. Thus, we have $p(x)=x-\partial^L_1$ and the result follows.
\end{proof}
From Lemmas \ref{lem-3-2} and \ref{lem-3-3}, we get the following result.
\begin{cor}\label{cor-3-1}
Let $G=(V,E)\in \mathcal{G}(n,n-3)$ with $n\ge 6$, then we have $\partial^L_1\ge\max_{v\in V}Tr(v)+2$. Furthermore, if there exists $v_0\in V$ such that $\partial^L_1=Tr(v_0)+2$, then $Tr(v_0)=\max_{v\in V}Tr(v)$.
\end{cor}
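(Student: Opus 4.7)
The plan is to chain together Lemmas \ref{lem-3-2} and \ref{lem-3-3} and exploit integrality. First I would observe that $K_n\in\mathcal{G}(n,n-1)$, so $G\not\cong K_n$ for any $G\in\mathcal{G}(n,n-3)$; hence by Lemma \ref{lem-3-2} the inequality $\partial^L_1\ge\max_{v\in V}Tr(v)+1$ is strict, i.e.\ $\partial^L_1>\max_{v\in V}Tr(v)+1$.

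Next I would invoke Lemma \ref{lem-3-3}, which asserts $\partial^L_1\in\mathbb{Z}$ for $n\ge 6$. Since the entries of $\mathcal{D}(G)$ are integers, each transmission $Tr(v)$ is also an integer, and so is $\max_{v\in V}Tr(v)+1$. A strict inequality between two integers gives a gap of at least one, so $\partial^L_1\ge\max_{v\in V}Tr(v)+2$, which is the first assertion.

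For the ``furthermore'' part, suppose $v_0\in V$ satisfies $\partial^L_1=Tr(v_0)+2$. Combining with the bound just proved,
\[
Tr(v_0)+2=\partial^L_1\ge\max_{v\in V}Tr(v)+2,
\]
so $Tr(v_0)\ge\max_{v\in V}Tr(v)$. The reverse inequality $Tr(v_0)\le\max_{v\in V}Tr(v)$ is trivial, whence $Tr(v_0)=\max_{v\in V}Tr(v)$.

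There is essentially no obstacle here; the statement is an immediate consequence of the two preceding lemmas together with the integrality of vertex transmissions. The only thing to be careful about is verifying that $G\ne K_n$ (to upgrade Lemma \ref{lem-3-2} to a strict inequality) before applying the integrality gap argument.
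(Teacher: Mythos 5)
Your proof is correct and follows exactly the paper's own argument: rule out $G\cong K_n$, apply Lemma \ref{lem-3-2} to get the strict inequality, and use the integrality of $\partial^L_1$ from Lemma \ref{lem-3-3} together with the integrality of transmissions to upgrade the bound by one. The ``furthermore'' part is likewise handled identically, so there is nothing to add.
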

\begin{proof}
Obviously, $G\ne K_n$. By Lemma \ref{lem-3-2}, we have that $\partial^L_1>\max_{v\in V}Tr(v)+1$. Besides, we get that $\partial^L_1$ is integral from Lemma \ref{lem-3-3}. Therefore, we have that $\partial^L_1\ge\max_{v\in V}Tr(v)+2$. Furthermore, if $\partial^L_1=Tr(v_0)+2$, then we have $Tr(v_0)+2\ge\max_{v\in V}Tr(v)+2$. It follows that $Tr(v_0)=\max_{v\in V}Tr(v)$.
\end{proof}
We say that a graph $G$ is \emph{$P_5$-free} if it does not contain induced $P_5$. From Lemma \ref{lem-3-1}, all graphs in $\mathcal{G}(n,n-3)$ are $P_5$-free. By Remark \ref{re-1}, a $P_5$-free graph may have diameter $2$ or $3$. Now we discuss $P_5$-free graphs with diameter $3$.
\begin{lem}\label{lem-3-4}
Let $G$ be a connected $P_5$-free graph on $n\ge 5$ vertices with $d(G)=3$. Then at least one of  $I_i$ for $i=1,2,3,4,5$ (shown in Fig. \ref{fig-1}) is an induced subgraph of $G$.
\end{lem}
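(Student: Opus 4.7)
The plan is to fix a diametral induced $P_4$ and then analyse how any single extra vertex attaches to it. Since $d(G)=3$, pick $u,v\in V(G)$ with $d_G(u,v)=3$; any shortest $u$--$v$ path $P\colon v_1v_2v_3v_4$ (with $v_1=u$, $v_4=v$) is automatically induced, since any chord would shorten it. Because $n\ge 5$ and $G$ is connected, some vertex of $V(G)\setminus V(P)$ must be adjacent to a vertex of $P$; choose any such vertex $w$. The whole lemma reduces to showing that the subgraph induced on $\{v_1,v_2,v_3,v_4,w\}$ is, up to relabelling, one of $I_1,\ldots,I_5$.

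Let $S:=N_G(w)\cap V(P)$, a nonempty subset of $\{v_1,v_2,v_3,v_4\}$. Two elementary obstructions eliminate $6$ of the $15$ candidates. First, if $\{v_1,v_4\}\subseteq S$, then $v_1wv_4$ is a walk of length $2$, contradicting $d_G(v_1,v_4)=3$; this kills every $S$ containing both endpoints of $P$. Second, if $S=\{v_1\}$ (respectively $S=\{v_4\}$), then $wv_1v_2v_3v_4$ (respectively $v_1v_2v_3v_4w$) is an induced $P_5$, contradicting Lemma \ref{lem-3-1}. Exactly nine subsets survive:
\[
\{v_2\},\ \{v_3\},\ \{v_1,v_2\},\ \{v_3,v_4\},\ \{v_1,v_3\},\ \{v_2,v_4\},\ \{v_2,v_3\},\ \{v_1,v_2,v_3\},\ \{v_2,v_3,v_4\}.
\]
Under the reflection $v_i\leftrightarrow v_{5-i}$ of $P$ these nine subsets collapse into five isomorphism classes of induced subgraph on $\{v_1,v_2,v_3,v_4,w\}$.

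For each of these five classes I would then check directly that the resulting five-vertex graph contains no induced $P_5$ and identify it with the corresponding picture in Fig.~\ref{fig-1}. The $P_5$-freeness check is routine, because on five vertices any candidate induced $P_5$ is simply an ordering of the vertices having precisely the four consecutive edges present, and in each surviving case $w$'s attachment to $P$ supplies an immediate chord destroying every such ordering. The main obstacle is thus purely bookkeeping: one has to make the enumeration airtight and match each of the five surviving configurations to the correct drawing $I_i$ in Fig.~\ref{fig-1}. No ingredient beyond Lemma \ref{lem-3-1} and the distance condition $d_G(v_1,v_4)=3$ is required.
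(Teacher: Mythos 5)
Your proposal is correct and follows essentially the same route as the paper: fix a diametral shortest (hence induced) path $P=v_1v_2v_3v_4$, take a vertex $w$ outside $P$ with a neighbour on $P$, rule out $\{v_1,v_4\}\subseteq N(w)\cap V(P)$ by the distance condition and $N(w)\cap V(P)=\{v_1\}$ or $\{v_4\}$ by $P_5$-freeness, and match the nine surviving attachment sets (five up to the reflection $v_i\leftrightarrow v_{5-i}$) to $I_1,\ldots,I_5$. The closing remark about verifying $P_5$-freeness of the resulting five-vertex graphs is unnecessary (they are induced subgraphs of a $P_5$-free graph), but it does no harm.
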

\begin{proof}
Suppose that $d(v_1,v_4)=3$ and $P=v_1v_2v_3v_4$ is a shortest path from $v_1$ to $v_4$. Since $n\ge 5$ and $G$ is connected, there exists $u\in V(G)\setminus V(P)$ such that $N(u)\cap V(P)\ne\emptyset$, where $N(u)=\{v\in V(G)\mid v\sim u\}$ is the neighbour set of $u$ in $G$. Moreover, since $d(v_1,v_4)=3$, we have that $v_1$ and $v_4$ cannot be adjacent to $u$ simultaneously, that is, $\{v_1,v_4\}\not\subseteq N(u)$. Therefore, we have $1\le |N(u)\cap V(P)|\le 3$.

Assume that $|N(u)\cap V(P)|=1$. We claim that $N(u)\cap V(P)=\{v_2\}$ or $\{v_3\}$ since $G$ is $P_5$-free. Both of them lead to the induced subgraph $I_1$.

Assume that $|N(u)\cap V(P)|=2$. We claim that $N(u)\cap V(P)=\{v_1,v_2\}$, $\{ v_3,v_4\}$, $\{v_1,v_3\}$, $\{v_2,v_4\}$, or $\{v_2,v_3\}$ because $\{v_1,v_4\}\not\subseteq N(u)$. The former two cases lead to the induced subgraph $I_2$, the next two cases lead to the induced subgraph $I_3$ and the last case leads to the induced subgraph $I_4$.

Assume that $|N(u)\cap V(P)|=3$. We claim that $N(u)\cap V(P)=\{v_1,v_2,v_3\}$ or $\{ v_2,v_3,v_4\}$ because $\{v_1,v_4\}\not\subseteq N(u)$. Both cases lead to the induced subgraph $I_5$.
\end{proof}
\begin{figure}[htbp]
\begin{center}
\unitlength 5mm % = 2.845pt
\linethickness{0.4pt}
\ifx\plotpoint\undefined\newsavebox{\plotpoint}\fi % GNUPLOT compatibility
\begin{picture}(24,9)(0,0)
\thicklines
\put(1,8){\line(1,0){6}}
\put(9,8){\line(1,0){6}}
\put(17,8){\line(1,0){6}}
\put(3,8){\line(0,-1){2}}
%\emline(11,8)(10,6)
\multiput(11,8)(-.0333333,-.0666667){30}{\line(0,-1){.0666667}}
%\end
%\emline(10,6)(9,8)
\multiput(10,6)(-.0333333,.0666667){30}{\line(0,1){.0666667}}
%\end
%\emline(17,8)(19,6)
\multiput(17,8)(.03333333,-.03333333){60}{\line(0,-1){.03333333}}
%\end
%\emline(19,6)(21,8)
\multiput(19,6)(.03333333,.03333333){60}{\line(0,1){.03333333}}
%\end
\put(1,3){\line(1,0){6}}
\put(9,3){\line(1,0){6}}
%\emline(3,3)(4,1)
\multiput(3,3)(.0333333,-.0666667){30}{\line(0,-1){.0666667}}
%\end
%\emline(4,1)(5,3)
\multiput(4,1)(.0333333,.0666667){30}{\line(0,1){.0666667}}
%\end
%\emline(9,3)(11,1)
\multiput(9,3)(.03333333,-.03333333){60}{\line(0,-1){.03333333}}
%\end
\put(11,1){\line(0,1){2}}
%\emline(13,3)(11,1)
\multiput(13,3)(-.03333333,-.03333333){60}{\line(0,-1){.03333333}}
%\end
\put(1,8){\circle*{.5}}
\put(3,8){\circle*{.5}}
\put(5,8){\circle*{.5}}
\put(7,8){\circle*{.5}}
\put(3,6){\circle*{.5}}
\put(9,8){\circle*{.5}}
\put(11,8){\circle*{.5}}
\put(13,8){\circle*{.5}}
\put(15,8){\circle*{.5}}
\put(17,8){\circle*{.5}}
\put(19,8){\circle*{.5}}
\put(21,8){\circle*{.5}}
\put(23,8){\circle*{.5}}
\put(10,6){\circle*{.5}}
\put(19,6){\circle*{.5}}
\put(1,3){\circle*{.5}}
\put(3,3){\circle*{.5}}
\put(5,3){\circle*{.5}}
\put(7,3){\circle*{.5}}
\put(9,3){\circle*{.5}}
\put(11,3){\circle*{.5}}
\put(13,3){\circle*{.5}}
\put(15,3){\circle*{.5}}
\put(4,1){\circle*{.5}}
\put(11,1){\circle*{.5}}
\put(1,8.7){\makebox(0,0)[cc]{\footnotesize$v_1$}}
\put(3,8.7){\makebox(0,0)[cc]{\footnotesize$v_2$}}
\put(5,8.7){\makebox(0,0)[cc]{\footnotesize$v_3$}}
\put(7,8.7){\makebox(0,0)[cc]{\footnotesize$v_4$}}
\put(9,8.7){\makebox(0,0)[cc]{\footnotesize$v_1$}}
\put(11,8.7){\makebox(0,0)[cc]{\footnotesize$v_2$}}
\put(13,8.7){\makebox(0,0)[cc]{\footnotesize$v_3$}}
\put(15,8.7){\makebox(0,0)[cc]{\footnotesize$v_4$}}
\put(17,8.7){\makebox(0,0)[cc]{\footnotesize$v_1$}}
\put(19,8.7){\makebox(0,0)[cc]{\footnotesize$v_2$}}
\put(21,8.7){\makebox(0,0)[cc]{\footnotesize$v_3$}}
\put(23,8.7){\makebox(0,0)[cc]{\footnotesize$v_4$}}
\put(1,3.7){\makebox(0,0)[cc]{\footnotesize$v_1$}}
\put(3,3.7){\makebox(0,0)[cc]{\footnotesize$v_2$}}
\put(5,3.7){\makebox(0,0)[cc]{\footnotesize$v_3$}}
\put(7,3.7){\makebox(0,0)[cc]{\footnotesize$v_4$}}
\put(9,3.7){\makebox(0,0)[cc]{\footnotesize$v_1$}}
\put(11,3.7){\makebox(0,0)[cc]{\footnotesize$v_2$}}
\put(13,3.7){\makebox(0,0)[cc]{\footnotesize$v_3$}}
\put(15,3.7){\makebox(0,0)[cc]{\footnotesize$v_4$}}
\put(3.6,6){\makebox(0,0)[cc]{\small$u$}}
\put(10.6,6){\makebox(0,0)[cc]{\small$u$}}
\put(19.6,6){\makebox(0,0)[cc]{\small$u$}}
\put(11.6,1){\makebox(0,0)[cc]{\small$u$}}
\put(4.6,1){\makebox(0,0)[cc]{\small$u$}}
\put(4,5){\makebox(0,0)[cc]{\small$I_1$}}
\put(11,5){\makebox(0,0)[cc]{\small$I_2$}}
\put(20,5){\makebox(0,0)[cc]{\small$I_3$}}
\put(4,0){\makebox(0,0)[cc]{\small$I_4$}}
\put(12,0){\makebox(0,0)[cc]{\small$I_5$}}
\end{picture}
\caption{The graphs $I_1, I_2,\ldots,I_5$}\label{fig-1}
\end{center}
\end{figure}
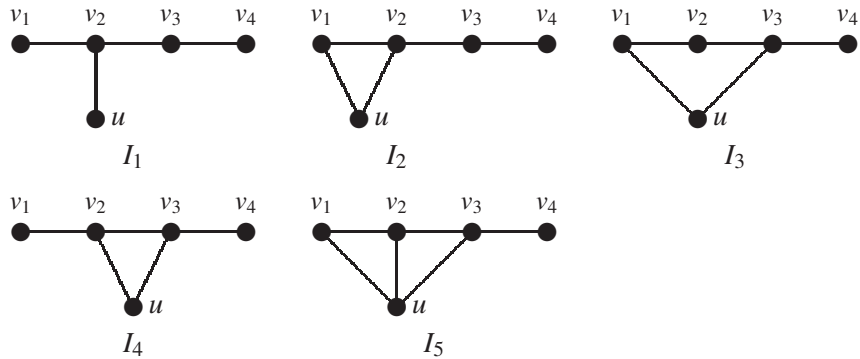
Next we introduce a tool which will be used frequently.
\begin{lem}\label{lem-3-5}
Let $G\in\mathcal{G}(n,n-3)$ with $n\ge 5$ and $M$ a principal submatrix of $\mathcal{D}^L(G)$ of order $5$. Then $\partial^L_1$ is also an eigenvalue of $M$ with multiplicity at least two. Furthermore, for each $1\le k\le 5$, there exists an eigenvector $z=(z_1,z_2,\ldots,z_5)^T$ of $M$ with respect to $\partial^L_1$ such that $z_k=0$ and $\sum_{i=1}^5z_i=0$.
\end{lem}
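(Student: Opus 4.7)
The plan is to exploit the $(n-3)$-dimensional eigenspace of $\partial^L_1$ on $\mathbb{R}^n$ to produce an at-least-two-dimensional family of eigenvectors of $M$ that automatically sum to zero. After relabeling vertices I may assume $M$ corresponds to the first five indices, and write
\[
\mathcal{D}^L(G) = \begin{pmatrix} M & B \\ B^T & C \end{pmatrix}.
\]
Let $E = \ker(\mathcal{D}^L(G) - \partial^L_1 I_n)$, so $\dim E = n-3$. Every $x \in E$ satisfies $\mathbf{1}^T x = 0$, because $\mathcal{D}^L(G)$ is symmetric with zero row sums, hence its $\partial^L_1$-eigenspace is orthogonal to the $0$-eigenvector $\mathbf{1}$.

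Next I would consider the subspace $E' = \{x \in E : x_6 = x_7 = \cdots = x_n = 0\}$, which is cut out of $E$ by $n-5$ linear constraints, so its dimension is at least $(n-3)-(n-5) = 2$. For any $x = (z^T, \mathbf{0}^T)^T \in E'$, the top block of the equation $\mathcal{D}^L(G) x = \partial^L_1 x$ reads $M z = \partial^L_1 z$, so $z \in \mathbb{R}^5$ is an eigenvector of $M$ for $\partial^L_1$, and $\sum_{i=1}^5 z_i = \mathbf{1}^T x = 0$. Since the map $x \mapsto z$ is injective on $E'$, its image $V \subseteq \mathbb{R}^5$ is a subspace of dimension at least $2$ consisting of such eigenvectors. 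This proves that $\partial^L_1$ is an eigenvalue of $M$ of multiplicity at least two. For the furthermore statement, fix any $k \in \{1,\ldots,5\}$: the condition $z_k = 0$ is a single linear constraint on $V$, and since $\dim V \ge 2$ there is a nonzero $z \in V$ satisfying it, which is the desired eigenvector.

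I do not anticipate a real obstacle; the whole argument reduces to the codimension inequality $\dim E' \ge \dim E - (n-5)$ combined with one further linear-algebra count. Cauchy interlacing (Theorem \ref{thm-2-1}) could alternatively be used to get the multiplicity $\ge 2$ by checking $\lambda_2(M) \ge \partial^L_{n-3} = \partial^L_1$, but that route does not automatically produce eigenvectors whose coordinates sum to zero, whereas the block-kernel viewpoint above yields this property for free, which is precisely what the lemma needs.
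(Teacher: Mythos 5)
Your proof is correct, and it takes a genuinely different route from the paper's. The paper works ``from small to big'': it first applies Cauchy interlacing (Theorem \ref{thm-2-1}) to conclude $\lambda_1(M)=\lambda_2(M)=\partial^L_1$, takes a linear combination of two independent eigenvectors of $M$ to kill the $k$-th coordinate, and only then lifts the resulting $z$ to $z^*=(z^T,\mathbf{0}^T)^T\in\mathbb{R}^n$, arguing via the equality case of the Rayleigh quotient, $\partial^L_1\ge \frac{{z^*}^T\mathcal{D}^L(G)z^*}{{z^*}^Tz^*}=\frac{z^TMz}{z^Tz}=\partial^L_1$, that $z^*$ lies in the top eigenspace of $\mathcal{D}^L(G)$; orthogonality to the all-ones vector then yields $\sum_i z_i=0$. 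You work ``from big to small'': you intersect the $(n-3)$-dimensional eigenspace $E$ with the coordinate subspace $\{x_6=\cdots=x_n=0\}$, a codimension count giving $\dim E'\ge 2$, and read off eigenvectors of $M$ from the top block, with $\sum_i z_i=0$ inherited for free from $E\perp\mathbf{1}$ (which does require noting $\partial^L_1\ne 0$, true for any connected graph). Your version avoids invoking interlacing and, more substantively, avoids the equality-case Rayleigh-quotient argument needed to transfer the sum-zero property back to $\mathbb{R}^5$; it also produces a whole $\ge 2$-dimensional space $V$ of sum-zero eigenvectors at once, so the ``furthermore'' clause becomes a one-line codimension remark. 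The paper's version has the mild advantage of quoting a named theorem for the multiplicity claim, but your block-kernel argument is self-contained and, if anything, cleaner. No gaps.
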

\begin{proof}
Let $\lambda_1\ge\lambda_2\ge\cdots\ge\lambda_5$ be the eigenvalues of $M$. By Theorem \ref{thm-2-1}, we have $\partial^L_1=\partial^L_{n-4}\le\lambda_1\le\partial^L_1$ and $\partial^L_1=\partial^L_{n-3}\le\lambda_2\le \partial^L_2=\partial^L_1$. Therefore, we have $\lambda_1=\lambda_2=\partial^L_1$. Suppose that $x=(x_1,\ldots,x_5)^T$ and $y=(y_1,\ldots,y_5)^T$ are two independent eigenvectors of $M$ with respect to $\partial^L_1$. For each fixed integer $1\le k\le 5$, by linear combination of $x$ and $y$, we get the eigenvector $z=(z_1,\ldots,z_5)^T$ satisfying $z_k=0$. Let  $z*=(z_1,\ldots,z_5,0,\ldots,0)^T$. Note that $\partial^L_1\ge \frac{{z*}^T\mathcal{D}^L(G)z*}{{z*}^Tz*}=\frac{z^TMz}{z^Tz}=\partial^L_1$. We get that $z*$ is an eigenvector of $\mathcal{D}^L(G)$ with respect to $\partial^L_1\ne 0$. Note that the all-ones vector $j$ is an eigenvector of $\mathcal{D}^L(G)$ with respect to $0$. We have ${z*}^Tj=\sum_{i=1}^5z_i=0$.
\end{proof}

Denote by $J(a,b)$ the graph obtained from $K_{1,a}\cup K_{1,b}$ by joining each pendent vertex of $K_{1,a}$ with every pendent vertex of $K_{1,b}$ (shown in Fig. \ref{fig-3}). The non-pendent vertices of $K_{1,a}$ and $K_{1,b}$ are called the \emph{roots} of $J(a,b)$.
\begin{lem}\label{lem-x-3-6}
Let $G$ be a connected $P_5$-free graph on $n\ge 5$ vertices with diameter $d(G)=3$. If none of $I_1$, $I_2$, $I_4$ and $I_5$ is an induced subgraph of $G$, then $G=J(a,b)$ for some positive integers $a,b\ge1$ and $a+b+2=n$.
\end{lem}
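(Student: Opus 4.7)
The plan is to apply Lemma~\ref{lem-3-4} in a highly restricted form: under our hypotheses the only configuration it permits is $I_3$, which pins down a rigid local structure at every diametrical pair. I would fix $u_1,u_2\in V(G)$ with $d(u_1,u_2)=3$, set $A=N(u_1)$ and $B=N(u_2)$ (disjoint, since any common neighbour would give $d(u_1,u_2)\le 2$), and let $D=V(G)\setminus(\{u_1,u_2\}\cup A\cup B)$. Any shortest $u_1u_2$-path furnishes $a_0\in A$ and $b_0\in B$ with $a_0\sim b_0$, so $P_0=u_1a_0b_0u_2$ is automatically an induced $P_4$. The goal is then to prove (i) both $A$ and $B$ are independent sets, (ii) every vertex of $A$ is adjacent to every vertex of $B$, and (iii) $D=\emptyset$; combining these three facts identifies $G$ with $J(|A|,|B|)$, where $a=|A|\ge 1$, $b=|B|\ge 1$, and $a+b+2=n$.

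For (i), if $a_1,a_2\in A$ were adjacent I would use the induced $P_4$ $P=u_1a_1b_0u_2$ with the extra vertex $u=a_2$: since $a_2\sim u_1$ and $a_2\sim a_1$ but $a_2\not\sim u_2$, the set $N(a_2)\cap V(P)$ is either $\{v_1,v_2\}$ or $\{v_1,v_2,v_3\}$, producing the forbidden $I_2$ or $I_5$; the symmetric argument forbids edges inside $B$. For (ii), taking $P_0$ together with an arbitrary $a\in A\setminus\{a_0\}$, the independence of $A$ gives $N(a)\cap V(P_0)\subseteq\{u_1,b_0\}$, and the exclusion of $I_1$ forces $a\sim b_0$. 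Symmetrically every $b\in B$ is adjacent to $a_0$, so $u_1ab_0u_2$ is an induced $P_4$ for each $a\in A$, and re-running the same analysis with $u=b\in B\setminus\{b_0\}$ (using independence of $B$) then forces $a\sim b$ for every pair.

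The step I expect to be the main obstacle is (iii). A vertex $w\in D$ with neighbours in both $A$ and $B$ immediately yields $I_4$ on some induced $P_4$ of the form $u_1abu_2$ provided by (ii); a vertex $w\in D$ with a neighbour $a\in A$ but none in $B$ yields $I_1$ on $u_1ab_0u_2$, and symmetrically for neighbours confined to $B$. To handle a $w\in D$ whose neighbours all lie in $D$, I would invoke connectedness of $G$: a shortest $w$-to-$u_1$ path must eventually leave $D$, so there exists some $w'\in D$ with a neighbour in $A\cup B$, and the previous two observations applied to $w'$ give the contradiction. The subtle point is to keep each reference $P_4$ genuinely induced (with its endpoints at distance $3$) while switching among $a_0,b_0,a,b$, so that the $I_3$ configuration guaranteed by Lemma~\ref{lem-3-4} remains the only one compatible with all four exclusions. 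Once $D=\emptyset$, the partition $\{u_1\}\cup A\cup B\cup\{u_2\}$ exhibits $G$ as $J(|A|,|B|)$, completing the proof.
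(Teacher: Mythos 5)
Your overall strategy is genuinely different from the paper's and is viable: you partition $V(G)$ as $\{u_1\}\cup A\cup B\cup D\cup\{u_2\}$ with $A=N(u_1)$, $B=N(u_2)$, and argue directly that $A$ and $B$ are independent, completely joined to each other, and that $D=\emptyset$. The paper instead takes a \emph{maximal} induced $J(a,b)$ containing the $I_3$ guaranteed by Lemma \ref{lem-3-4} and shows that any further vertex attached to it either creates one of $P_5, I_1, I_2, I_4, I_5$ or extends the $J(a,b)$, contradicting maximality. Your route avoids the maximality bookkeeping and exhibits the global structure in one pass, which is arguably cleaner; the case analyses in your step (iii), including the connectivity argument forcing $D=\emptyset$, are correct.

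There is, however, a circular dependency between your steps (i) and (ii) as ordered. Step (i) declares $P=u_1a_1b_0u_2$ to be an induced $P_4$ for an \emph{arbitrary} $a_1\in A$, but $a_1\sim b_0$ is precisely what step (ii) is supposed to establish, and step (ii) in turn invokes the independence of $A$ from step (i); if $a_1\not\sim b_0$ the $I_2/I_5$ identification in (i) collapses because $P$ is not a path. The repair is to run the local analysis on the fixed path $P_0=u_1a_0b_0u_2$ first: for any $a\in A\setminus\{a_0\}$ you have $a\sim u_1$ and $a\not\sim u_2$, so $N(a)\cap V(P_0)$ is one of $\{u_1\}$ (an induced $P_5$, excluded by hypothesis --- note this configuration is $P_5$, not $I_1$ as you wrote), $\{u_1,a_0\}$ (an induced $I_2$), $\{u_1,a_0,b_0\}$ (an induced $I_5$), or $\{u_1,b_0\}$, the only survivor. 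This yields simultaneously $a\sim b_0$ and $a\not\sim a_0$ for every $a\in A$ without assuming independence; now $u_1a_1b_0u_2$ is certified induced with endpoints at distance $3$, your $I_5$ exclusion gives the independence of $A$ (symmetrically of $B$), and the remainder of (ii) and (iii) goes through as you describe.
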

\begin{proof}
Let $d(v_1,v_4)=3$ and $P=v_1v_2v_3v_4$ a shortest path between $v_1$ and $v_4$. By Lemma \ref{lem-3-4}, at least one of $I_i$ (shown in Fig. \ref{fig-1}) is an induced subgraph of $G$ for $i=1,2,\ldots,5$. Since none of $I_1$, $I_2$, $I_4$ or $I_5$ is an induced subgraph of $G$, we obtain that $G$ contains induced $I_3$.

Note that $I_3=J(2,1)$ with roots $v_1$ and $v_4$ is an induced subgraph of $G$. We may assume that $G'=J(a,b)$ with roots $v_1$ and $v_4$ is the maximal  induced subgraph of $G$ including $J(2,1)$. Denote by $U_1=N_{G'}(v_1)=\{x_1,x_2,\ldots,x_a\}$   and $U_2=N_{G'}(v_4)=\{y_1,y_2,\ldots,y_b\}$. Obviously, $v_2,u\in U_1$ and $v_3\in U_2$.
In what follows we will show  that $G=J(a,b)$ with roots $v_1$ and $v_4$.

By the way of contradiction, assume that $G\not=J(a,b)$. Then there exists $v\in V(G)\setminus V(G')$ such that $N_G(v)\cap V(G')\ne\emptyset$.
Since $d(v_1,v_4)=3$,  $v$ is adjacent to at most one of $v_1$ and $v_4$. We claim that $v$ is exactly adjacent to one of  $v_1$ and $v_4$.  Otherwise, we have $v\not\sim v_1,v_4$. Then  $N_G(v)\cap U_1\ne \emptyset$ or $N_G(v)\cap U_2\ne\emptyset$. If $v$ is adjacent to some vertex in $N_G(v)\cap U_1$ and some vertex in $N_G(v)\cap U_2$, say $v\sim x_1$ and $v\sim y_1$ (see Fig. \ref{fig-3} (1)), then we get the induced subgraph $G[v_1,x_1,y_1,v_4,v]=I_4$, a contradiction. If $v$ is only adjacent to some vertex in $N_G(v)\cap U_1$, say $v\sim x_1$ (see Fig. \ref{fig-3} (2)), then we get the induced subgraph $G[v_1,x_1,y_1,v_4,v]=I_1$, a contradiction. If $v$ is only adjacent to some vertex in $N_G(v)\cap U_2$, say $v\sim y_1$,  then we also get the induced subgraph $G[v_1,x_1,y_1,v_4,v]=I_1$, a contradiction. Now we need to consider the following two situations.

{\flushleft\bf Case 1.} $v\sim v_1$ and $v\not\sim v_4$;

First, we will show that  $U_2\subseteq N_G(v)$. Otherwise, there exists some vertex in $U_2$ not adjacent to $v$, say $v\not\sim y_1$. Now, if $v\not\sim x_1$ (see Fig. \ref{fig-3} (3)), then we get the induced subgraph $G[v,v_1,x_1,y_1,v_4]=P_5$, a contradiction; if $v\sim x_1$ (see Fig. \ref{fig-3} (4)), then we get the induced subgraph $G[v_1,x_1,y_1,v_4,v]=I_2$, a contradiction.

Next we will show that  $N_G(v)\cap U_1=\emptyset$. Otherwise, there exists some vertex in $U_1$  adjacent to $v$, say  $v\sim x_1$. Recall that $v\sim y_1$ (see Fig. \ref{fig-3} (5)) according to the above arguments, we get the induced subgraph $G[v_1,x_1,y_1,v_4,v]=I_5$, a contradiction.

Summariszing the above discussion, we know that $V(G')\cup\{v\}$ induces a subgraph $J(a+1,b)$ of $G$. This is impossible since $G'=J(a,b)$ is assumed to be the maximal induced subgraph including $J(2,1)$.

{\flushleft\bf Case 2.} $v\sim v_4$ and $v\not\sim v_1$;

As similar as Case 1, by symmetry we can also deduce that  $G[V(G')\cup \{v\}]=J(a,b+1)$. This is also impossible.

We complete this proof.
\end{proof}
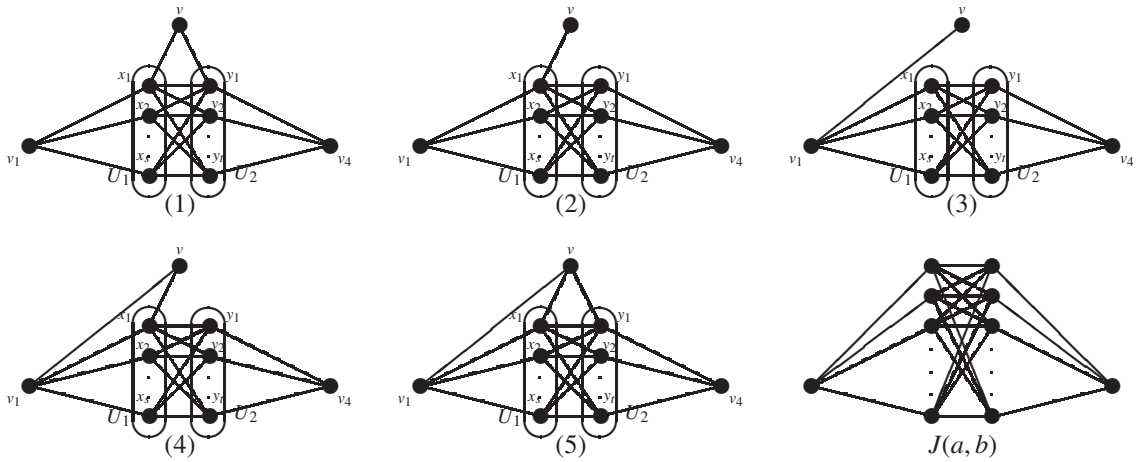
\begin{figure}[H]
\unitlength 4mm % = 2.845pt
\linethickness{0.4pt}
\ifx\plotpoint\undefined\newsavebox{\plotpoint}\fi % GNUPLOT compatibility
\begin{picture}(38,15)(0,0)
\thicklines
%\emline(1,10)(5,12)
\multiput(1,10)(.06666667,.03333333){60}{\line(1,0){.06666667}}
%\end
%\emline(1,10)(5,11)
\multiput(1,10)(.1333333,.0333333){30}{\line(1,0){.1333333}}
%\end
%\emline(1,10)(5,9)
\multiput(1,10)(.1333333,-.0333333){30}{\line(1,0){.1333333}}
%\end
%\emline(7,12)(11,10)
\multiput(7,12)(.06666667,-.03333333){60}{\line(1,0){.06666667}}
%\end
%\emline(11,10)(7,11)
\multiput(11,10)(-.1333333,.0333333){30}{\line(-1,0){.1333333}}
%\end
%\emline(11,10)(7,9)
\multiput(11,10)(-.1333333,-.0333333){30}{\line(-1,0){.1333333}}
%\end
\put(5,12){\line(1,0){2}}
%\emline(5,12)(7,11)
\multiput(5,12)(.0666667,-.0333333){30}{\line(1,0){.0666667}}
%\end
%\emline(5,12)(7,9)
\multiput(5,12)(.03333333,-.05){60}{\line(0,-1){.05}}
%\end
%\emline(5,11)(7,12)
\multiput(5,11)(.0666667,.0333333){30}{\line(1,0){.0666667}}
%\end
\put(5,11){\line(1,0){2}}
%\emline(7,11)(5,9)
\multiput(7,11)(-.03333333,-.03333333){60}{\line(0,-1){.03333333}}
%\end
%\emline(5,11)(7,9)
\multiput(5,11)(.03333333,-.03333333){60}{\line(0,-1){.03333333}}
%\end
%\emline(7,12)(5,9)
\multiput(7,12)(-.03333333,-.05){60}{\line(0,-1){.05}}
%\end
\put(5,9){\line(1,0){2}}
\put(0.5,9.5){\makebox(0,0)[cc]{\tiny$v_1$}}
\put(4.2,12.3){\makebox(0,0)[cc]{\tiny$x_1$}}
\put(7.8,12.3){\makebox(0,0)[cc]{\tiny$y_1$}}
\put(11.5,9.5){\makebox(0,0)[cc]{\tiny$v_4$}}
\put(4.8,11.3){\makebox(0,0)[cc]{\tiny$x_2$}}
\put(4.8,9.6){\makebox(0,0)[cc]{\tiny$x_s$}}
\put(7.3,11.3){\makebox(0,0)[cc]{\tiny$y_2$}}
\put(7.3,9.6){\makebox(0,0)[cc]{\tiny$y_t$}}
%\emline(5,12)(6,14)
\multiput(5,12)(.0333333,.0666667){30}{\line(0,1){.0666667}}
%\end
%\emline(6,14)(7,12)
\multiput(6,14)(.0333333,-.0666667){30}{\line(0,-1){.0666667}}
%\end
\put(6,14.5){\makebox(0,0)[cc]{\tiny$v$}}
%\dottedline(5,11)(5,9)
\multiput(4.93,10.93)(0,-.6667){4}{{\rule{.8pt}{.8pt}}}
%\end
%\dottedline(7,11)(7,9)
\multiput(6.93,10.93)(0,-.6667){4}{{\rule{.8pt}{.8pt}}}
%\end
\put(1,10){\circle*{.5}}
\put(5,12){\circle*{.5}}
\put(7,12){\circle*{.5}}
\put(6,14){\circle*{.5}}
\put(5,11){\circle*{.5}}
\put(7,11){\circle*{.5}}
\put(11,10){\circle*{.5}}
\put(7,9){\circle*{.5}}
\put(5,9){\circle*{.5}}
\put(4.988,10.478){\oval(1.078,4.348)[]}
\put(6.94,10.46){\oval(1.115,4.311)[]}
\put(3.966,8.973){\makebox(0,0)[cc]{\scriptsize$U_1$}}
\put(8.203,9.085){\makebox(0,0)[cc]{\scriptsize$U_2$}}
\put(6,8){\makebox(0,0)[cc]{\footnotesize(1)}}
%\emline(14,10)(18,12)
\multiput(14,10)(.06666667,.03333333){60}{\line(1,0){.06666667}}
%\end
%\emline(14,10)(18,11)
\multiput(14,10)(.1333333,.0333333){30}{\line(1,0){.1333333}}
%\end
%\emline(14,10)(18,9)
\multiput(14,10)(.1333333,-.0333333){30}{\line(1,0){.1333333}}
%\end
%\emline(20,12)(24,10)
\multiput(20,12)(.06666667,-.03333333){60}{\line(1,0){.06666667}}
%\end
%\emline(24,10)(20,11)
\multiput(24,10)(-.1333333,.0333333){30}{\line(-1,0){.1333333}}
%\end
%\emline(24,10)(20,9)
\multiput(24,10)(-.1333333,-.0333333){30}{\line(-1,0){.1333333}}
%\end
\put(18,12){\line(1,0){2}}
%\emline(18,12)(20,11)
\multiput(18,12)(.0666667,-.0333333){30}{\line(1,0){.0666667}}
%\end
%\emline(18,12)(20,9)
\multiput(18,12)(.03333333,-.05){60}{\line(0,-1){.05}}
%\end
%\emline(18,11)(20,12)
\multiput(18,11)(.0666667,.0333333){30}{\line(1,0){.0666667}}
%\end
\put(18,11){\line(1,0){2}}
%\emline(20,11)(18,9)
\multiput(20,11)(-.03333333,-.03333333){60}{\line(0,-1){.03333333}}
%\end
%\emline(18,11)(20,9)
\multiput(18,11)(.03333333,-.03333333){60}{\line(0,-1){.03333333}}
%\end
%\emline(20,12)(18,9)
\multiput(20,12)(-.03333333,-.05){60}{\line(0,-1){.05}}
%\end
\put(18,9){\line(1,0){2}}
\put(13.5,9.5){\makebox(0,0)[cc]{\tiny$v_1$}}
\put(17.2,12.3){\makebox(0,0)[cc]{\tiny$x_1$}}
\put(20.8,12.3){\makebox(0,0)[cc]{\tiny$y_1$}}
\put(24.5,9.5){\makebox(0,0)[cc]{\tiny$v_4$}}
\put(17.8,11.3){\makebox(0,0)[cc]{\tiny$x_2$}}
\put(17.8,9.6){\makebox(0,0)[cc]{\tiny$x_s$}}
\put(20.3,11.3){\makebox(0,0)[cc]{\tiny$y_2$}}
\put(20.3,9.6){\makebox(0,0)[cc]{\tiny$y_t$}}
%\emline(18,12)(19,14)
\multiput(18,12)(.0333333,.0666667){30}{\line(0,1){.0666667}}
%\end
\put(19,14.5){\makebox(0,0)[cc]{\tiny$v$}}
%\dottedline(18,11)(18,9)
\multiput(17.93,10.93)(0,-.6667){4}{{\rule{.8pt}{.8pt}}}
%\end
%\dottedline(20,11)(20,9)
\multiput(19.93,10.93)(0,-.6667){4}{{\rule{.8pt}{.8pt}}}
%\end
\put(14,10){\circle*{.5}}
\put(18,12){\circle*{.5}}
\put(20,12){\circle*{.5}}
\put(19,14){\circle*{.5}}
\put(18,11){\circle*{.5}}
\put(20,11){\circle*{.5}}
\put(24,10){\circle*{.5}}
\put(20,9){\circle*{.5}}
\put(18,9){\circle*{.5}}
\put(17.988,10.478){\oval(1.078,4.348)[]}
\put(19.94,10.46){\oval(1.115,4.311)[]}
\put(16.966,8.973){\makebox(0,0)[cc]{\scriptsize$U_1$}}
\put(21.203,9.085){\makebox(0,0)[cc]{\scriptsize$U_2$}}
\put(19,8){\makebox(0,0)[cc]{\footnotesize(2)}}
%\emline(27,10)(31,12)
\multiput(27,10)(.06666667,.03333333){60}{\line(1,0){.06666667}}
%\end
%\emline(27,10)(31,11)
\multiput(27,10)(.1333333,.0333333){30}{\line(1,0){.1333333}}
%\end
%\emline(27,10)(31,9)
\multiput(27,10)(.1333333,-.0333333){30}{\line(1,0){.1333333}}
%\end
%\emline(33,12)(37,10)
\multiput(33,12)(.06666667,-.03333333){60}{\line(1,0){.06666667}}
%\end
%\emline(37,10)(33,11)
\multiput(37,10)(-.1333333,.0333333){30}{\line(-1,0){.1333333}}
%\end
%\emline(37,10)(33,9)
\multiput(37,10)(-.1333333,-.0333333){30}{\line(-1,0){.1333333}}
%\end
\put(31,12){\line(1,0){2}}
%\emline(31,12)(33,11)
\multiput(31,12)(.0666667,-.0333333){30}{\line(1,0){.0666667}}
%\end
%\emline(31,12)(33,9)
\multiput(31,12)(.03333333,-.05){60}{\line(0,-1){.05}}
%\end
%\emline(31,11)(33,12)
\multiput(31,11)(.0666667,.0333333){30}{\line(1,0){.0666667}}
%\end
\put(31,11){\line(1,0){2}}
%\emline(33,11)(31,9)
\multiput(33,11)(-.03333333,-.03333333){60}{\line(0,-1){.03333333}}
%\end
%\emline(31,11)(33,9)
\multiput(31,11)(.03333333,-.03333333){60}{\line(0,-1){.03333333}}
%\end
%\emline(33,12)(31,9)
\multiput(33,12)(-.03333333,-.05){60}{\line(0,-1){.05}}
%\end
\put(31,9){\line(1,0){2}}
\put(26.5,9.5){\makebox(0,0)[cc]{\tiny$v_1$}}
\put(30.2,12.3){\makebox(0,0)[cc]{\tiny$x_1$}}
\put(33.8,12.3){\makebox(0,0)[cc]{\tiny$y_1$}}
\put(37.5,9.5){\makebox(0,0)[cc]{\tiny$v_4$}}
\put(30.8,11.3){\makebox(0,0)[cc]{\tiny$x_2$}}
\put(30.8,9.6){\makebox(0,0)[cc]{\tiny$x_s$}}
\put(33.3,11.3){\makebox(0,0)[cc]{\tiny$y_2$}}
\put(33.3,9.6){\makebox(0,0)[cc]{\tiny$y_t$}}
\put(32,14.5){\makebox(0,0)[cc]{\tiny$v$}}
%\dottedline(31,11)(31,9)
\multiput(30.93,10.93)(0,-.6667){4}{{\rule{.8pt}{.8pt}}}
%\end
%\dottedline(33,11)(33,9)
\multiput(32.93,10.93)(0,-.6667){4}{{\rule{.8pt}{.8pt}}}
%\end
\put(27,10){\circle*{.5}}
\put(31,12){\circle*{.5}}
\put(33,12){\circle*{.5}}
\put(32,14){\circle*{.5}}
\put(31,11){\circle*{.5}}
\put(33,11){\circle*{.5}}
\put(37,10){\circle*{.5}}
\put(33,9){\circle*{.5}}
\put(31,9){\circle*{.5}}
\put(30.988,10.478){\oval(1.078,4.348)[]}
\put(32.94,10.46){\oval(1.115,4.311)[]}
\put(29.966,8.973){\makebox(0,0)[cc]{\scriptsize$U_1$}}
\put(34.203,9.085){\makebox(0,0)[cc]{\scriptsize$U_2$}}
\put(32,8){\makebox(0,0)[cc]{\footnotesize(3)}}
%\emline(1,2)(5,4)
\multiput(1,2)(.06666667,.03333333){60}{\line(1,0){.06666667}}
%\end
%\emline(1,2)(5,3)
\multiput(1,2)(.1333333,.0333333){30}{\line(1,0){.1333333}}
%\end
%\emline(1,2)(5,1)
\multiput(1,2)(.1333333,-.0333333){30}{\line(1,0){.1333333}}
%\end
%\emline(7,4)(11,2)
\multiput(7,4)(.06666667,-.03333333){60}{\line(1,0){.06666667}}
%\end
%\emline(11,2)(7,3)
\multiput(11,2)(-.1333333,.0333333){30}{\line(-1,0){.1333333}}
%\end
%\emline(11,2)(7,1)
\multiput(11,2)(-.1333333,-.0333333){30}{\line(-1,0){.1333333}}
%\end
\put(5,4){\line(1,0){2}}
%\emline(5,4)(7,3)
\multiput(5,4)(.0666667,-.0333333){30}{\line(1,0){.0666667}}
%\end
%\emline(5,4)(7,1)
\multiput(5,4)(.03333333,-.05){60}{\line(0,-1){.05}}
%\end
%\emline(5,3)(7,4)
\multiput(5,3)(.0666667,.0333333){30}{\line(1,0){.0666667}}
%\end
\put(5,3){\line(1,0){2}}
%\emline(7,3)(5,1)
\multiput(7,3)(-.03333333,-.03333333){60}{\line(0,-1){.03333333}}
%\end
%\emline(5,3)(7,1)
\multiput(5,3)(.03333333,-.03333333){60}{\line(0,-1){.03333333}}
%\end
%\emline(7,4)(5,1)
\multiput(7,4)(-.03333333,-.05){60}{\line(0,-1){.05}}
%\end
\put(5,1){\line(1,0){2}}
\put(0.5,1.5){\makebox(0,0)[cc]{\tiny$v_1$}}
\put(4.2,4.3){\makebox(0,0)[cc]{\tiny$x_1$}}
\put(7.8,4.3){\makebox(0,0)[cc]{\tiny$y_1$}}
\put(11.5,1.5){\makebox(0,0)[cc]{\tiny$v_4$}}
\put(4.8,3.3){\makebox(0,0)[cc]{\tiny$x_2$}}
\put(4.8,1.6){\makebox(0,0)[cc]{\tiny$x_s$}}
\put(7.3,3.3){\makebox(0,0)[cc]{\tiny$y_2$}}
\put(7.3,1.6){\makebox(0,0)[cc]{\tiny$y_t$}}
%\emline(5,4)(6,6)
\multiput(5,4)(.0333333,.0666667){30}{\line(0,1){.0666667}}
%\end
\put(6,6.5){\makebox(0,0)[cc]{\tiny$v$}}
%\dottedline(5,3)(5,1)
\multiput(4.93,2.93)(0,-.6667){4}{{\rule{.8pt}{.8pt}}}
%\end
%\dottedline(7,3)(7,1)
\multiput(6.93,2.93)(0,-.6667){4}{{\rule{.8pt}{.8pt}}}
%\end
\put(1,2){\circle*{.5}}
\put(5,4){\circle*{.5}}
\put(7,4){\circle*{.5}}
\put(6,6){\circle*{.5}}
\put(5,3){\circle*{.5}}
\put(7,3){\circle*{.5}}
\put(11,2){\circle*{.5}}
\put(7,1){\circle*{.5}}
\put(5,1){\circle*{.5}}
\put(4.988,2.478){\oval(1.078,4.348)[]}
\put(6.94,2.46){\oval(1.115,4.311)[]}
\put(3.966,.973){\makebox(0,0)[cc]{\scriptsize$U_1$}}
\put(8.203,1.085){\makebox(0,0)[cc]{\scriptsize$U_2$}}
\put(6,0){\makebox(0,0)[cc]{\footnotesize(4)}}
%\emline(14,2)(18,4)
\multiput(14,2)(.06666667,.03333333){60}{\line(1,0){.06666667}}
%\end
%\emline(14,2)(18,3)
\multiput(14,2)(.1333333,.0333333){30}{\line(1,0){.1333333}}
%\end
%\emline(14,2)(18,1)
\multiput(14,2)(.1333333,-.0333333){30}{\line(1,0){.1333333}}
%\end
%\emline(20,4)(24,2)
\multiput(20,4)(.06666667,-.03333333){60}{\line(1,0){.06666667}}
%\end
%\emline(24,2)(20,3)
\multiput(24,2)(-.1333333,.0333333){30}{\line(-1,0){.1333333}}
%\end
%\emline(24,2)(20,1)
\multiput(24,2)(-.1333333,-.0333333){30}{\line(-1,0){.1333333}}
%\end
\put(18,4){\line(1,0){2}}
%\emline(18,4)(20,3)
\multiput(18,4)(.0666667,-.0333333){30}{\line(1,0){.0666667}}
%\end
%\emline(18,4)(20,1)
\multiput(18,4)(.03333333,-.05){60}{\line(0,-1){.05}}
%\end
%\emline(18,3)(20,4)
\multiput(18,3)(.0666667,.0333333){30}{\line(1,0){.0666667}}
%\end
\put(18,3){\line(1,0){2}}
%\emline(20,3)(18,1)
\multiput(20,3)(-.03333333,-.03333333){60}{\line(0,-1){.03333333}}
%\end
%\emline(18,3)(20,1)
\multiput(18,3)(.03333333,-.03333333){60}{\line(0,-1){.03333333}}
%\end
%\emline(20,4)(18,1)
\multiput(20,4)(-.03333333,-.05){60}{\line(0,-1){.05}}
%\end
\put(18,1){\line(1,0){2}}
\put(13.5,1.5){\makebox(0,0)[cc]{\tiny$v_1$}}
\put(17.2,4.3){\makebox(0,0)[cc]{\tiny$x_1$}}
\put(20.8,4.3){\makebox(0,0)[cc]{\tiny$y_1$}}
\put(24.5,1.5){\makebox(0,0)[cc]{\tiny$v_4$}}
\put(17.8,3.3){\makebox(0,0)[cc]{\tiny$x_2$}}
\put(17.8,1.6){\makebox(0,0)[cc]{\tiny$x_s$}}
\put(20.3,3.3){\makebox(0,0)[cc]{\tiny$y_2$}}
\put(20.3,1.6){\makebox(0,0)[cc]{\tiny$y_t$}}
%\emline(18,4)(19,6)
\multiput(18,4)(.0333333,.0666667){30}{\line(0,1){.0666667}}
%\end
%\emline(19,6)(20,4)
\multiput(19,6)(.0333333,-.0666667){30}{\line(0,-1){.0666667}}
%\end
\put(19,6.5){\makebox(0,0)[cc]{\tiny$v$}}
%\dottedline(18,3)(18,1)
\multiput(17.93,2.93)(0,-.6667){4}{{\rule{.8pt}{.8pt}}}
%\end
%\dottedline(20,3)(20,1)
\multiput(19.93,2.93)(0,-.6667){4}{{\rule{.8pt}{.8pt}}}
%\end
\put(14,2){\circle*{.5}}
\put(18,4){\circle*{.5}}
\put(20,4){\circle*{.5}}
\put(19,6){\circle*{.5}}
\put(18,3){\circle*{.5}}
\put(20,3){\circle*{.5}}
\put(24,2){\circle*{.5}}
\put(20,1){\circle*{.5}}
\put(18,1){\circle*{.5}}
\put(17.988,2.478){\oval(1.078,4.348)[]}
\put(19.94,2.46){\oval(1.115,4.311)[]}
\put(16.966,.973){\makebox(0,0)[cc]{\scriptsize$U_1$}}
\put(21.203,1.085){\makebox(0,0)[cc]{\scriptsize$U_2$}}
\put(19,0){\makebox(0,0)[cc]{\footnotesize(5)}}
\put(27,2){\line(1,1){4}}
\put(27,2){\line(4,3){4}}
%\emline(27,2)(31,4)
\multiput(27,2)(.06666667,.03333333){60}{\line(1,0){.06666667}}
%\end
%\emline(27,2)(31,1)
\multiput(27,2)(.1333333,-.0333333){30}{\line(1,0){.1333333}}
%\end
\put(31,1){\line(1,0){2}}
%\emline(33,1)(37,2)
\multiput(33,1)(.1333333,.0333333){30}{\line(1,0){.1333333}}
%\end
%\emline(37,2)(33,4)
\multiput(37,2)(-.06666667,.03333333){60}{\line(-1,0){.06666667}}
%\end
\put(37,2){\line(-4,3){4}}
\put(37,2){\line(-1,1){4}}
\put(31,6){\line(1,0){2}}
%\emline(33,6)(31,5)
\multiput(33,6)(-.0666667,-.0333333){30}{\line(-1,0){.0666667}}
%\end
%\emline(33,6)(31,4)
\multiput(33,6)(-.03333333,-.03333333){60}{\line(0,-1){.03333333}}
%\end
\put(33,6){\line(-2,-5){2}}
%\emline(31,6)(33,5)
\multiput(31,6)(.0666667,-.0333333){30}{\line(1,0){.0666667}}
%\end
%\emline(31,6)(33,4)
\multiput(31,6)(.03333333,-.03333333){60}{\line(0,-1){.03333333}}
%\end
\put(31,6){\line(2,-5){2}}
\put(31,5){\line(1,0){2}}
%\emline(31,5)(33,4)
\multiput(31,5)(.0666667,-.0333333){30}{\line(1,0){.0666667}}
%\end
%\emline(31,5)(33,1)
\multiput(31,5)(.03333333,-.06666667){60}{\line(0,-1){.06666667}}
%\end
\put(31,4){\line(1,0){2}}
%\emline(33,4)(31,1)
\multiput(33,4)(-.03333333,-.05){60}{\line(0,-1){.05}}
%\end
%\emline(31,4)(33,1)
\multiput(31,4)(.03333333,-.05){60}{\line(0,-1){.05}}
%\end
%\emline(33,5)(31,4)
\multiput(33,5)(-.0666667,-.0333333){30}{\line(-1,0){.0666667}}
%\end
%\emline(33,5)(31,1)
\multiput(33,5)(-.03333333,-.06666667){60}{\line(0,-1){.06666667}}
%\end
\put(27,2){\circle*{.5}}
\put(31,6){\circle*{.5}}
\put(31,5){\circle*{.5}}
%\dottedline(31,4)(31,1)
\multiput(30.93,3.93)(0,-.75){5}{{\rule{.8pt}{.8pt}}}
%\end
%\dottedline(33,4)(33,1)
\multiput(32.93,3.93)(0,-.75){5}{{\rule{.8pt}{.8pt}}}
%\end
\put(31,4){\circle*{.5}}
\put(31,1){\circle*{.5}}
\put(33,1){\circle*{.5}}
\put(33,4){\circle*{.5}}
\put(33,5){\circle*{.5}}
\put(33,6){\circle*{.5}}
\put(32,0){\makebox(0,0)[cc]{\footnotesize$J(a,b)$}}
\put(27,10){\line(5,4){5}}
\put(1,2){\line(5,4){5}}
\put(19,6){\line(-5,-4){5}}
\put(37,2){\circle*{.5}}
\end{picture}
  \caption{The graphs used in Lemma \ref{lem-x-3-6}}\label{fig-3}
\end{figure}

After the completion of the preparations, we get one of our main results.
\begin{thm}\label{thm-3-1}
Let $G\in\mathcal{G}(n,n-3)$ with $n\ge 6$, then $d(G)=2$.
\end{thm}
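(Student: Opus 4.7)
The plan is to argue by contradiction: assume $d(G)=3$. Since $G$ is $P_5$-free by Lemma~\ref{lem-3-1}, Lemma~\ref{lem-3-4} forces $G$ to contain at least one of $I_1,\ldots,I_5$ as an induced subgraph. My strategy is to rule out $I_1,I_2,I_4,I_5$; by Lemma~\ref{lem-x-3-6} this forces $G=J(a,b)$ for some $a,b\ge 1$, which I would then exclude separately.

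For each configuration $I_i$ with $i\in\{1,2,4,5\}$, form the $5\times 5$ principal submatrix $M$ of $\mathcal{D}^L(G)$ supported on the five vertices $\{v_1,v_2,v_3,v_4,u\}$ of $I_i$. The off-diagonal entries of $M$ are pairwise distances in $G$; using $d(v_1,v_4)=3$, the induced edges of $I_i$, and the $P_5$-free property (any would-be two-step shortcut either shortens $d(v_1,v_4)$ or produces a forbidden induced $P_5$), these distances can be pinned down up to at most a small split on $d(u,v_4)$ or $d(u,v_1)$. Now apply Lemma~\ref{lem-3-5}: $\partial^L_1$ is a double eigenvalue of $M$, and its eigenspace lies inside $\{z\in\mathbb{R}^5:\sum z_i=0\}$. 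Choosing the coordinate $k$ with $z_k=0$, the $k$-th row of $Mz=\partial^L_1 z$ collapses to $\sum_{j\ne k}d_{kj}z_j=0$, a constraint involving only known distances and thereby bypassing the unknown transmissions. Combining this with $\sum z_j=0$ and the remaining four eigenvalue equations yields explicit linear formulas expressing each transmission $T_{v_i}$ in terms of $\partial^L_1$ and the distances; substituting these into Corollary~\ref{cor-3-1} (which says $\partial^L_1\ge\max_v Tr(v)+2$) then forces a numerical contradiction in every subcase.

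For the residual case $G=J(a,b)$, I would compute $\mathrm{Spec}_{\mathcal{L}}(J(a,b))$ via the equitable partition $\{\{r_1\},U_1,U_2,\{r_2\}\}$. Eigenvectors supported on $U_1$ with entry sum zero yield the eigenvalue $2a+b+3$ with multiplicity $a-1$; those supported on $U_2$ yield $a+2b+3$ with multiplicity $b-1$; and the remaining four eigenvalues come from the $4\times 4$ quotient matrix, which for $a=b$ decouples under the natural swap symmetry into a symmetric block giving $\{0,3a+3\}$ and an antisymmetric block giving $\tfrac{1}{2}\bigl((5a+9)\pm\sqrt{(a+1)(a+9)}\bigr)$. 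A direct inspection shows that in every case $\partial^L_1$ is either irrational (contradicting Lemma~\ref{lem-3-3}) or has multiplicity strictly less than $n-3$, excluding $J(a,b)$ from $\mathcal{G}(n,n-3)$.

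The main obstacle is the bookkeeping across the four $I_i$ cases: the diagonal entries of $M$ are global transmissions of $G$ not determined by the 5-vertex picture, and one or two off-diagonal distances need a short subcase split. The zero-coordinate-eigenvector trick from Lemma~\ref{lem-3-5} is precisely what isolates distance-only relations that sidestep the unknown transmissions, and the rigidity of a double eigenvalue on a $5\times 5$ matrix supplies just enough equations to express every $T_{v_i}$ linearly in $\partial^L_1$, so that Corollary~\ref{cor-3-1} closes each case.
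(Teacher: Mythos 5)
Your proposal is correct, and the first (and longer) half of it coincides with the paper's argument: assume $d(G)=3$, invoke Lemma~\ref{lem-3-1} and Lemma~\ref{lem-3-4} to get one of $I_1,\ldots,I_5$ as an induced subgraph, and kill $I_1,I_2,I_4,I_5$ by taking the $5\times5$ principal submatrix (with the same small split on the ambiguous distance $d(u,v_1)$ or $d(u,v_4)$), extracting via Lemma~\ref{lem-3-5} an eigenvector for $\partial^L_1$ with a prescribed zero coordinate and zero sum, and reading off either $x=0$ or an identity of the form $\partial^L_1=t_i$ or $t_i+1$ that violates Corollary~\ref{cor-3-1}. One small correction to your summary: in several subcases the contradiction is not ``numerical'' via Corollary~\ref{cor-3-1} but simply that the row equations force the eigenvector to vanish.

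Where you genuinely diverge is the residual case. The paper does \emph{not} skip $I_3$: it runs the same submatrix argument on $I_3$ to extract the single surviving conclusion $\partial^L_1=t_2+2$, hence $Tr(v_2)=\max_v Tr(v)$ by Corollary~\ref{cor-3-1}; it then applies Lemma~\ref{lem-x-3-6} to get $G=J(a,b)$ and observes that in $J(a,b)$ the vertex $v_2$ satisfies $Tr(v_2)=2a+b+1<2a+b+3=Tr(v_4)$, an immediate contradiction. You instead bypass $I_3$ entirely and exclude $J(a,b)$ by computing its distance Laplacian spectrum through the equitable partition $\{\{r_1\},U_1,U_2,\{r_2\}\}$. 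Your numbers check out: the $U_1$- and $U_2$-supported eigenvectors give $2a+b+3$ and $a+2b+3$ with multiplicities $a-1$ and $b-1$, and for $a=b$ the antisymmetric quotient block gives $\tfrac12\bigl(5a+9\pm\sqrt{(a+1)(a+9)}\bigr)$, whose larger root exceeds $3a+3$ and is irrational since $(a+1)(a+9)=(a+5)^2-16$ is never a square for $a\ge1$, contradicting Lemma~\ref{lem-3-3}. For $a\ne b$ the cleanest way to finish your route is to note that Lemma~\ref{lem-3-2} gives $\partial^L_1>\max Tr(v)=\max(2a+b,a+2b)+3$, so $\partial^L_1$ lives only in the $4\times4$ quotient and has multiplicity at most $3<a+b-1=n-3$ once $n\ge7$, with $n=6$ settled by a trace computation; you should spell this out, since ``direct inspection'' is doing real work there. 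The trade-off: the paper's route through $I_3$ is shorter and avoids any spectral computation on $J(a,b)$, while yours is more self-contained and yields the full spectrum of $J(a,b)$ as a by-product, at the cost of the equitable-partition bookkeeping.
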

\begin{proof}
By Lemma \ref{lem-3-1} and Remark \ref{re-1}, we get that $G$ is $P_5$-free and $d(G)=2$ or $d(G)=3$. Assume by contradiction that $d(G)=3$. Let $d(v_1,v_4)=3$ and $P=v_1v_2v_3v_4$ a shortest path between $v_1$ and $v_4$. By Lemma \ref{lem-3-4}, $G$ contains at least one of $I_i$ (labelled as Fig. \ref{fig-1}) as an induced subgraph for $i=1,2,\ldots,5$.

Suppose that $I_1$ is an induced subgraph of $G$. Note that $d_G(v_4,u)=d_{I_1}(v_4,u)-1=2$ or $d_G(v_4,u)=d_{I_1}(v_4,u)=3$. We get that either $M_1$ or $M_1'$ is a principal submatrix of $\mathcal{D}^L(G)$ with respect to $I_1$, where
\[M_1=\left(\begin{array}{ccccc}t_1&-1&-2&-3&-2\\-1&t_2&-1&-2&-1\\-2&-1&t_3&-1&-2\\-3&-2&-1&t_4&-2\\-2&-1&-2&-2&t_5\end{array}\right)\begin{array}{l}v_1\\v_2\\v_3\\v_4\\u\end{array},M_1'=\left(\begin{array}{ccccc}t_1&-1&-2&-3&-2\\-1&t_2&-1&-2&-1\\-2&-1&t_3&-1&-2\\-3&-2&-1&t_4&-3\\-2&-1&-2&-3&t_5\end{array}\right)\begin{array}{l}v_1\\v_2\\v_3\\v_4\\u\end{array}.\]
If $M_1$ is a principal submatrix of $\mathcal{D}^L(G)$, by Lemma \ref{lem-3-5}, there exists an eigenvector $x=(x_1,x_2,x_3,x_4,0)$ satisfying $x_1+x_2+x_3+x_4=0$ such that $M_1x=\partial^L_1x$. Consider the fifth entry of both sides of $M_1x=\partial^L_1x$, we have $-2x_1-x_2-2x_3-2x_4=0$. It follows that $x_2=0$ and $x_1+x_3+x_4=0$. Next we consider the second entry of both sides of $M_1x=\partial^L_1x$, we have $-x_1-x_3-2x_4=0$. It follows that $x_4=0$ and $x_1+x_3=0$. We consider the fourth entry of both sides of $M_1x=\partial^L_1x$, we have $-3x_1-x_3=0$. It follows that $x_1=x_3=0$. Thus, we have $x=0$, a contradiction.
If $M_1'$ is a principal submatrix of $\mathcal{D}^L(G)$, by Lemma \ref{lem-3-5}, there exists an eigenvector $y=(0,y_2,y_3,y_4,y_5)^T$ satisfying
\begin{equation}\label{eq-1}y_2+y_3+y_4+y_5=0\end{equation} such that
\begin{equation}\label{eq-2}M_1'y=\partial^L_1y.\end{equation}
Consider the first entry of both sides of Eq. (\ref{eq-2}), we have \begin{equation}\label{eq-3}-y_2-2y_3-3y_4-2y_5=0.\end{equation}
Combining (\ref{eq-1}) and (\ref{eq-3}), we have $y_2=y_4$. If $y_2=y_4=0$, we consider the fourth entry of both sides of (\ref{eq-2}) and we get that $y_5=0$. It follows that $y=0$, a contradiction. If $y_2=y_4\ne 0$, we consider the second entry of both sides of (\ref{eq-2}) and we get that $\partial^L_1=t_2-\frac{y_3+2y_4+y_5}{y_2}=t_2-\frac{y_2+y_3+y_4+y_5}{y_2}$. From (\ref{eq-1}), we have $\partial^L_1=t_2$. It contradicts Corollary \ref{cor-3-1}.

Suppose that $I_2$ is an induced subgraph of $G$. Note that $d_G(v_4,u)=d_{I_2}(v_4,u)=3$ or $d_G(v_4,u)=d_{I_2}(v_4,u)-1=2$. We get that the matrix $M_2$ or $M_2'$ is a principal submatrix of $\mathcal{D}^L(G)$ with respect to $I_2$, where
\[M_2=\left(\begin{array}{ccccc}t_1&-1&-2&-3&-1\\-1&t_2&-1&-2&-1\\-2&-1&t_3&-1&-2\\-3&-2&-1&t_4&-3\\-1&-1&-2&-3&t_5\end{array}\right)\begin{array}{l}v_1\\v_2\\v_3\\v_4\\u\end{array}, M_2'=\left(\begin{array}{ccccc}t_1&-1&-2&-3&-1\\-1&t_2&-1&-2&-1\\-2&-1&t_3&-1&-2\\-3&-2&-1&t_4&-2\\-1&-1&-2&-2&t_5\end{array}\right)\begin{array}{l}v_1\\v_2\\v_3\\v_4\\u\end{array}.\]
If $M_2$ is a principal submatrix of $\mathcal{D}^L(G)$, by Lemma \ref{lem-3-5}, there exists an eigenvector $x=(x_1,0,x_3,x_4,x_5)^T$ satisfying $x_1+x_3+x_4+x_5=0$ such that $M_2x=\partial^L_1x$. We successively consider the second, the fourth and the third entries of both sides of $M_2x=\partial^L_1x$, we get that $x_3=x_4=0$ and $x_1+x_5=0$. If $x_1=0$, then $x_5=0$ and $x=0$, a contradiction. If $x_1\ne0$, consider the first entry of both sides of $M_2x=\partial^L_1x$, we get that $\partial^L_1=t_1-\frac{x_5}{x_1}=t_1+1$. It contradicts Corollary \ref{cor-3-1}.
If $M_2'$ is a principal submatrix of $\mathcal{D}^L(G)$, by Lemma \ref{lem-3-5}, there exists an eigenvector $y=(y_1,y_2,y_3,y_4,0)^T$ satisfying $y_1+y_2+y_3+y_4=0$ such that $M_2'y=\partial^L_1y$. Consider the fifth entry of both sides of $M_2'y=\partial^L_1y$, we have
$-y_1-y_2-2y_3-2y_4=0$. It leads to that $y_1+y_2=y_3+y_4=0$. If $y_3=y_4=0$, we consider the third entry of both sides of $M_2'y=\partial^L_1y$ and we get that $y_1=y_2=0$. It leads to that $y=0$, a contradiction. If $y_1=y_2=0$, we consider the second entry of both sides of $M_2'y=\partial^L_1y$ and we get that $y_3=y_4=0$. It leads to that $y=0$, a contradiction. If $y_1,y_2,y_3,y_4\ne 0$, without loss of generality, we may suppose that $y=(a,-a,1,-1,0)$. Consider the third entry of both sides of $M_2'y=\partial^L_1y$, we have $\partial^L_1=t_3+1-a$. By Corollary \ref{cor-3-1}, we have $a<0$. Consider the fourth entry of both sides of $M_2'y=\partial^L_1y$, we have $\partial^L_1=t_4+1+a$. By Corollary \ref{cor-3-1}, we have $a>0$, a contradiction.

Suppose that $I_4$ is an induced subgraph of $G$. We get that the matrix $M_4$ is a principal submatrix of $\mathcal{D}^L(G)$ with respect to $I_4$, where
\[M_4=\left(\begin{array}{ccccc}t_1&-1&-2&-3&-2\\-1&t_2&-1&-2&-1\\-2&-1&t_3&-1&-1\\-3&-2&-1&t_4&-2\\-2&-1&-1&-2&t_5\end{array}\right)\begin{array}{l}v_1\\v_2\\v_3\\v_4\\u\end{array}.\]
By Lemma \ref{lem-3-5}, there exists an eigenvector $x=(x_1,0,x_3,x_4,x_5)^T$ satisfying $x_1+x_3+x_4+x_5=0$ such that $M_4x=\partial^L_1x$. Consider the second and the fourth entries of both sides of $M_4x=\partial^L_1x$ successively, we get that $x_4=0$, $x_1=x_3$ and $x_5=-2x_1$. If $x_1=x_3=0$, then $x=0$, a contradiction. If $x_1=x_3\ne 0$, consider the third entry of both sides of $M_4x=\partial^L_1x$ and we get that $\partial^L_1=t_3$. It contradicts Corollary \ref{cor-3-1}.

Suppose that $I_5$ is an induced subgraph of $G$. We get that the matrix $M_5$ is a principal submatrix of $\mathcal{D}^L(G)$ with respect to $I_5$, where
\[M_5=\left(\begin{array}{ccccc}t_1&-1&-2&-3&-1\\-1&t_2&-1&-2&-1\\-2&-1&t_3&-1&-1\\-3&-2&-1&t_4&-2\\-1&-1&-1&-2&t_5\end{array}\right)\begin{array}{l}v_1\\v_2\\v_3\\v_4\\u\end{array}.\]
By Lemma \ref{lem-3-5}, there exists an eigenvector $x=(x_1,0,x_3,x_4,x_5)^T$ satisfying $x_1+x_3+x_4+x_5=0$ such that $M_5x=\partial^L_1x$. We successively consider the second and the fourth entries of both sides of $M_5x=\partial^L_1x$, we have that $x_4=0$, $x_1=x_3$ and $x_5=-2x_3$. If $x_3=0$, we have $x=0$, a contradiction. If $x_3\ne0$, consider the third entry of both sides of $M_5x=\partial^L_1x$, we have $\partial^L_1=t_3$. It contradicts Corollary \ref{cor-3-1}.

Suppose that $I_3$ is an induced subgraph of $G$. On the one hand, we get that the matrix $M_3$ is a principal submatrix of $\mathcal{D}^L(G)$ with respect to $I_3$, where
\[M_3=\left(\begin{array}{ccccc}t_1&-1&-2&-3&-1\\-1&t_2&-1&-2&-2\\-2&-1&t_3&-1&-1\\-3&-2&-1&t_4&-2\\-1&-2&-1&-2&t_5\end{array}\right)\begin{array}{l}v_1\\v_2\\v_3\\v_4\\u\end{array}.\]
By Lemma \ref{lem-3-5}, there exists an eigenvector $x=(x_1,x_2,0,x_4,x_5)^T$ satisfying $x_1+x_2+x_4+x_5=0$ such that $M_3x=\partial^L_1x$. We successively consider the third, the first and the fourth entries of both sides of $M_3x=\partial^L_1x$, then we get that $x_1=x_4=0$ and $x_2+x_5=0$. If $x_2=x_5=0$, then $x=0$, a contradiction. If $x_2\ne 0$, without loss of generality, we may suppose that $x=(0,1,0,0,-1)^T$. Consider the second entry of $M_3x=\partial^L_1x$, we get that $\partial^L_1=t_2+2$.  By Corollary \ref{cor-3-1}, we get that
\begin{equation}\label{eq-4}Tr(v_2)=\max_{v\in V(G)}Tr(v).\end{equation}
On the other hand, recall that $G$ is $P_5$-free. Moreover, by the arguments above, we have that $G$ contains no induced $I_1$, $I_2$, $I_4$ or $I_5$. Therefore, by Lemma \ref{lem-x-3-6}, we have that $G=J(a,b)$ with roots $v_1$ and $v_4$. By simple calculation, we have $Tr(v_1)=a+2b+3$, $Tr(v_4)=2a+b+3$, $Tr(x)=2a+b+1$ for every $x\in N(v_1)$ and $Tr(y)=2b+a+1$ for every $y\in N(v_4)$. Note that $v_2\in N(v_1)$. We get that
\[Tr(v_2)=2a+b+1<2a+b+3=Tr(v_4),\]
which contradicts (\ref{eq-4}).

We complete the proof.
\end{proof}

The result above showed that the graphs in $\mathcal{G}(n,n-3)$ have diameter $2$. In fact, we can further obtain that $G$ is the join of two graphs. To prove this, we need the following result.
\begin{lem}\label{lem-x-3-7}
Let $G\in\mathcal{G}(n,n-3)$ with $n\ge 6$, then none of $J_1(=C_5)$, $J_2$ or $J_3$ (shown in Fig. \ref{fig-2}) can be an induced subgraph of $G$.
\end{lem}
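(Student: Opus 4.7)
The plan is to argue case-by-case exactly as in the proof of Theorem \ref{thm-3-1}. Fix $i\in\{1,2,3\}$ and assume for contradiction that $J_i$ is an induced subgraph of $G$. Since Theorem \ref{thm-3-1} gives $d(G)=2$, every pair of vertices from $V(J_i)$ has distance $1$ in $G$ whenever they are adjacent in $J_i$ and distance $2$ otherwise. Hence the principal submatrix $M_i$ of $\mathcal{D}^L(G)$ indexed by $V(J_i)$ is completely determined up to its diagonal entries, which are the transmissions $t_v=Tr_G(v)$.

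By Lemma \ref{lem-3-5}, $\partial^L_1$ is an eigenvalue of $M_i$ of multiplicity at least two, and for each chosen index $k\in\{1,\ldots,5\}$ one can find an eigenvector $x=(x_1,\ldots,x_5)^T$ of $M_i$ at $\partial^L_1$ satisfying $x_k=0$ and $\sum_jx_j=0$. Reading the $k$-th coordinate of $M_ix=\partial^L_1x$ eliminates $t_k$ from the equation and gives a single linear relation among the remaining $x_j$; combined with $\sum_jx_j=0$ this forces several of the $x_j$ to pair up with equal magnitude and opposite sign. Reading a carefully chosen second row $\ell$ of $M_ix=\partial^L_1x$ and substituting these pairings then produces either $x=0$ (contradiction), or an identity $\partial^L_1=t_\ell+c$ with $c\in\{0,1\}$ (immediately contradicting Corollary \ref{cor-3-1}), or the boundary identity $\partial^L_1=t_\ell+2$. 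In the last case I would compute transmissions within $J_i$ directly and exhibit some $v'\in V(J_i)$ with $Tr(v')>t_\ell$, contradicting the maximality assertion of Corollary \ref{cor-3-1}.

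For $J_2$ and $J_3$ this approach should be routine: it mirrors the five parallel arguments executed in the proof of Theorem \ref{thm-3-1}, with the arithmetic adapted to the specific adjacency pattern inside each graph. The main obstacle is $J_1=C_5$, whose vertex-transitivity means that a single application of Lemma \ref{lem-3-5} gives only weak information. To handle it I would apply the lemma twice, for two different choices of $k$ (say $k=1$ and $k=2$); the local reflective symmetries of $M_1$ about each chosen vertex force two pairs of diagonal entries to coincide in each application, and combining the four resulting equalities forces all five transmissions $t_1,\ldots,t_5$ to equal a common value $t$. At that point $M_1$ is a circulant matrix with first row $(t,-1,-2,-2,-1)$ whose eigenvalues are $t-6$, $t+(3+\sqrt{5})/2$ (with multiplicity two), and $t+(3-\sqrt{5})/2$ (with multiplicity two). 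Since $\partial^L_1$ must be one of the doubled eigenvalues, $\partial^L_1$ would be irrational, contradicting Lemma \ref{lem-3-3}. This final irrationality step is what makes the $C_5$ case structurally different from the others, and it is the place I expect the most care to be needed.
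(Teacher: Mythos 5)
Your proposal follows the paper's skeleton exactly — form the $5\times 5$ principal submatrix, invoke Lemma \ref{lem-3-5} to obtain a $\partial^L_1$-eigenvector with a prescribed zero coordinate and zero sum, and read off rows — but it finishes the $C_5$ case by a genuinely different route, and that variant is correct. The paper also applies Lemma \ref{lem-3-5} twice to $C_5$, but from the first eigenvector $(0,a,1,-1,-a)^T$ it extracts $\partial^L_1=t_4+a+1=t_5+\tfrac{1}{a}+2$, uses Corollary \ref{cor-3-1} and integrality to force $a=1$ and hence $t_4=t_5+1$, then contradicts this with $t_4=t_5$ from the second eigenvector. You instead observe that each application forces two pairs of transmissions to coincide ($t_2=t_5$, $t_3=t_4$ from $k=1$; $t_1=t_3$, $t_4=t_5$ from $k=2$), conclude all five are equal, and kill the resulting circulant because $\partial^L_1$, having multiplicity at least two in it, must be one of the irrational eigenvalues $t+(3\pm\sqrt{5})/2$, contradicting Lemma \ref{lem-3-3}. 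I verified the row computations and the circulant spectrum; this works, provided you also dispose of the degenerate subcases in which further coordinates vanish and the eigenvector collapses to $0$ (as the paper does). Two caveats. First, for $J_2$ and $J_3$ the deferred computation does terminate in $x=0$ (reading rows in the orders fifth, third, first, fourth and third, first, fourth, second respectively), so your fallback is never invoked — which is fortunate, because that fallback is not sound as stated: if the outcome were $\partial^L_1=t_\ell+2$, Corollary \ref{cor-3-1} would force $t_\ell=\max_{v\in V(G)}Tr(v)$, and transmissions in $G$ are not determined by the induced subgraph $J_i$ alone, so ``computing transmissions within $J_i$'' cannot exhibit a vertex of larger transmission; in Theorem \ref{thm-3-1} this step required first pinning down the global structure $G=J(a,b)$ via Lemma \ref{lem-x-3-6}. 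Second, your appeal to Theorem \ref{thm-3-1} to fix the off-diagonal entries is legitimate here but not even necessary: in each $J_i$ every non-adjacent pair has a common neighbour inside $J_i$, so those distances are $2$ regardless.
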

\begin{proof}
Assume by contradiction that $J_1=C_5$ is an induced subgraph of $G$. We get that the matrix $N_1$ is a principal submatrix of $\mathcal{D}^L(G)$ with respect to $J_1$, where
\[N_1=\left(\begin{array}{ccccc}t_1&-1&-2&-2&-1\\-1&t_2&-1&-2&-2\\-2&-1&t_3&-1&-2\\-2&-2&-1&t_4&-1\\-1&-2&-2&-1&t_5\end{array}\right)\begin{array}{l}v_1\\v_2\\v_3\\v_4\\u\end{array}.\]
By Lemma \ref{lem-3-5}, there exists an eigenvector $x=(0,x_2,x_3,x_4,x_5)^T$ satisfying $x_2+x_3+x_4+x_5=0$ such that $N_1x=\partial^L_1x$. From the first  entry of $N_1x=\partial^L_1x$, we have $-x_2-2x_3-2x_4-x_5=0$. Therefore, we have $x_3+x_4=0$ and $x_2+x_5=0$. If $x_3=x_4=0$, consider the third entry of both sides of $N_1x=\partial^L_1x$ and we get that $x_2=x_5=0$. It leads to that $x=0$, a contradiction. If $x_2=x_5=0$, consider the second entry of both sides of $N_1x=\partial^L_1x$ and we get that $x_3=x_4=0$. It leads to that $x=0$, a contradiction. If $x_2, x_3,x_4,x_5\ne 0$, without loss of generality, we may suppose that $x=(0,a,1,-1,-a)^T$. Thus, we have
\begin{equation}\label{eq-5}
\left(\begin{array}{ccccc}t_1&-1&-2&-2&-1\\-1&t_2&-1&-2&-2\\-2&-1&t_3&-1&-2\\-2&-2&-1&t_4&-1\\-1&-2&-2&-1&t_5\end{array}\right)\left(\begin{array}{c}0\\a\\1\\-1\\-a\end{array}\right)=\partial^L_1\left(\begin{array}{c}0\\a\\1\\-1\\-a\end{array}\right).
\end{equation}
Consider the fourth entry of both sides of (\ref{eq-5}), we have
\[\partial^L_1=t_4+a+1.\]
By Corollary \ref{cor-3-1}, we have $a\ge1$. Consider the fifth entry of both sides of Eq. (\ref{eq-5}), we have
\[\partial^L_1=t_5+\frac{1}{a}+2.\]
By Lemma \ref{lem-3-3}, we get that $\partial^L_1$ is integral. Therefore, $a$ and $\frac{1}{a}$ are both integral. Thus, we have $a=1$ and $\partial^L_1=t_4+2=t_5+3$. It follows that
\begin{equation}\label{eq-6}t_4=t_5+1.\end{equation}
On the other hand, by Lemma\ref{lem-3-5}, there also exists an eigenvector $y=(y_1,0,y_3,y_4,y_5)^T$ satisfying $y_1+y_3+y_4+y_5=0$ such that $N_1y=\partial^L_1y$. From the second entry of $N_1y=\partial^L_1y$, we have $-y_1-y_3-2y_4-2y_5=0$. Therefore, we have $y_4+y_5=0$ and $y_1+y_3=0$. If $y_1=y_3=0$ or $y_4=y_5=0$, we also get $y=0$, a contradiction. If $y_1,y_3,y_4,y_5\ne 0$, without loss of generality, we may suppose that $y=(b,0,-b,1,-1)^T$. Thus, we have
 \begin{equation}\label{eq-7}
\left(\begin{array}{ccccc}t_1&-1&-2&-2&-1\\-1&t_2&-1&-2&-2\\-2&-1&t_3&-1&-2\\-2&-2&-1&t_4&-1\\-1&-2&-2&-1&t_5\end{array}\right)\left(\begin{array}{c}b\\0\\-b\\1\\-1\end{array}\right)=\partial^L_1\left(\begin{array}{c}b\\0\\-b\\1\\-1\end{array}\right).
\end{equation}
 Consider the fourth and the fifth entries of both sides of Eq. (\ref{eq-7}), we have
\[\partial^L_1=t_4-b+1=t_5-b+1.\]
It follows that $t_4=t_5$, which contradicts (\ref{eq-6}).

Assume by contradiction that $J_2$ is an induced subgraph of $G$. We get that the matrix $N_2$ is a principal submatrix of $\mathcal{D}^L$ with respect to $J_2$, where
\[N_2=\left(\begin{array}{ccccc}t_1&-1&-2&-2&-1\\-1&t_2&-1&-2&-1\\-2&-1&t_3&-1&-2\\-2&-2&-1&t_4&-1\\-1&-1&-2&-1&t_5\end{array}\right)\begin{array}{l}v_1\\v_2\\v_3\\v_4\\u\end{array}.\]
By Lemma \ref{lem-3-5}, there exists an eigenvector $x=(x_1,x_2,x_3,x_4,0)^T$ satisfying $x_1+x_2+x_3+x_4=0$ such that $N_2x=\partial^L_1x$. We successively consider the fifth, the third, the first and the fourth entries of both sides of $N_2x=\partial^L_1x$, then we get that $x=0$, a contradiction.

Assume by contradiction that $J_3$ is an induced subgraph of $G$. We get that the matrix $N_3$ is a principal submatrix of $\mathcal{D}^L$ with respect to $J_3$, where
\[N_3=\left(\begin{array}{ccccc}t_1&-1&-2&-2&-1\\-1&t_2&-1&-2&-1\\-2&-1&t_3&-1&-1\\-2&-2&-1&t_4&-1\\-1&-1&-1&-1&t_5\end{array}\right)\begin{array}{l}v_1\\v_2\\v_3\\v_4\\u\end{array}.\]
By Lemma \ref{lem-3-5}, there exists an eigenvector $x=(x_1,x_2,0,x_4,x_5)^T$ satisfying $x_1+x_2+x_4+x_5=0$ such that $N_3x=\partial^L_1x$. Consider the third, the first, the fourth and the second entries of both sides of $N_3x=\partial^L_1x$ succesively, we get that $x=0$, a contradiction.
\end{proof}
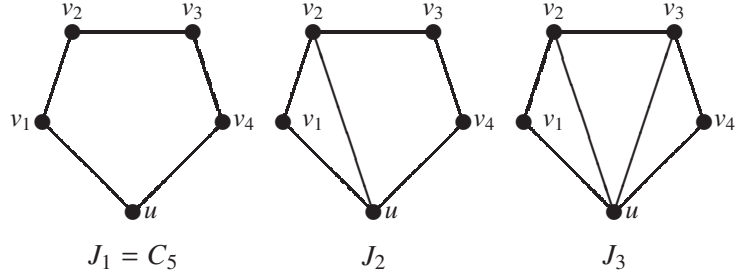
\begin{figure}[htbp]
\begin{center}
\unitlength 4mm % = 2.845pt
\linethickness{0.4pt}
\ifx\plotpoint\undefined\newsavebox{\plotpoint}\fi % GNUPLOT compatibility
\begin{picture}(24,9)(0,0)
\thicklines
\put(2,8){\line(1,0){4}}
%\emline(6,8)(7,5)
\multiput(6,8)(.0333333,-.1){30}{\line(0,-1){.1}}
%\end
%\emline(7,5)(4,2)
\multiput(7,5)(-.03370787,-.03370787){89}{\line(0,-1){.03370787}}
%\end
%\emline(4,2)(1,5)
\multiput(4,2)(-.03370787,.03370787){89}{\line(0,1){.03370787}}
%\end
%\emline(1,5)(2,8)
\multiput(1,5)(.0333333,.1){30}{\line(0,1){.1}}
%\end
%\emline(9,5)(10,8)
\multiput(9,5)(.0333333,.1){30}{\line(0,1){.1}}
%\end
\put(10,8){\line(1,0){4}}
%\emline(14,8)(15,5)
\multiput(14,8)(.0333333,-.1){30}{\line(0,-1){.1}}
%\end
%\emline(15,5)(12,2)
\multiput(15,5)(-.03370787,-.03370787){89}{\line(0,-1){.03370787}}
%\end
%\emline(12,2)(9,5)
\multiput(12,2)(-.03370787,.03370787){89}{\line(0,1){.03370787}}
%\end
%\emline(17,5)(18,8)
\multiput(17,5)(.0333333,.1){30}{\line(0,1){.1}}
%\end
\put(18,8){\line(1,0){4}}
%\emline(22,8)(23,5)
\multiput(22,8)(.0333333,-.1){30}{\line(0,-1){.1}}
%\end
%\emline(23,5)(20,2)
\multiput(23,5)(-.03370787,-.03370787){89}{\line(0,-1){.03370787}}
%\end
%\emline(20,2)(17,5)
\multiput(20,2)(-.03370787,.03370787){89}{\line(0,1){.03370787}}
%\end
\put(12,2){\line(-1,3){2}}
\put(18,8){\line(1,-3){2}}
\put(20,2){\line(1,3){2}}
\put(2,8){\circle*{.5}}
\put(6,8){\circle*{.5}}
\put(10,8){\circle*{.5}}
\put(14,8){\circle*{.5}}
\put(18,8){\circle*{.5}}
\put(22,8){\circle*{.5}}
\put(1,5){\circle*{.5}}
\put(7,5){\circle*{.5}}
\put(9,5){\circle*{.5}}
\put(15,5){\circle*{.5}}
\put(17,5){\circle*{.5}}
\put(23,5){\circle*{.5}}
\put(20,2){\circle*{.5}}
\put(12,2){\circle*{.5}}
\put(4,2){\circle*{.5}}
\put(0.3,5){\makebox(0,0)[cc]{\footnotesize$v_1$}}
\put(2,8.7){\makebox(0,0)[cc]{\footnotesize$v_2$}}
\put(6,8.7){\makebox(0,0)[cc]{\footnotesize$v_3$}}
\put(7.7,5){\makebox(0,0)[cc]{\footnotesize$v_4$}}
\put(10,5){\makebox(0,0)[cc]{\footnotesize$v_1$}}
\put(10,8.7){\makebox(0,0)[cc]{\footnotesize$v_2$}}
\put(14,8.7){\makebox(0,0)[cc]{\footnotesize$v_3$}}
\put(15.7,5){\makebox(0,0)[cc]{\footnotesize$v_4$}}
\put(18,5){\makebox(0,0)[cc]{\footnotesize$v_1$}}
\put(18,8.7){\makebox(0,0)[cc]{\footnotesize$v_2$}}
\put(22,8.7){\makebox(0,0)[cc]{\footnotesize$v_3$}}
\put(23.7,5){\makebox(0,0)[cc]{\footnotesize$v_4$}}
\put(4.6,2){\makebox(0,0)[cc]{\footnotesize$u$}}
\put(12.6,2){\makebox(0,0)[cc]{\footnotesize$u$}}
\put(20.6,2){\makebox(0,0)[cc]{\footnotesize$u$}}
\put(4,0.5){\makebox(0,0)[cc]{\small$J_1=C_5$}}
\put(12,0.5){\makebox(0,0)[cc]{\small$J_2$}}
\put(20,0.5){\makebox(0,0)[cc]{\small$J_3$}}
\end{picture}
\caption{The graphs in Lemma \ref{lem-x-3-7}}
\label{fig-2}
\end{center}
\end{figure}
Using the above tools, we get the following result.
\begin{thm}\label{thm-3-2}
Let $G\in\mathcal{G}(n,n-3)$ with $n\ge 6$, then $\bar{G}$ is disconnected. It means that $G$ is the join of some connected  graphs.
\end{thm}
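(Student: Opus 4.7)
The plan is to reduce the statement to showing that $G$ is a cograph and then invoke Lemma \ref{lem-2-2}. By Theorem \ref{thm-3-1} we already have $d(G)=2$. Combined with Lemma \ref{lem-2-2}(2)-(3), it suffices to prove that $G$ contains no induced $P_4$, since then $\bar G$ is disconnected and $G$ splits as a join. Hence the single remaining task is the $P_4$-free conclusion.

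Suppose, for contradiction, that $G$ contains an induced path $P=v_1v_2v_3v_4$. Being induced forces $v_1\not\sim v_3$, $v_2\not\sim v_4$ and $v_1\not\sim v_4$. Since $d(G)=2$, the vertices $v_1$ and $v_4$ must have a common neighbor $u$; because $v_2\not\sim v_4$ and $v_1\not\sim v_3$, we have $u\notin V(P)$. Thus the set $\{v_1,v_2,v_3,v_4,u\}$ has $5$ distinct vertices, and I would carry out a short case analysis according to the possible adjacencies between $u$ and the interior vertices $v_2,v_3$ of $P$.

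There are exactly four such configurations: (a) $u\not\sim v_2$ and $u\not\sim v_3$, which yields the induced $5$-cycle $J_1=C_5$; (b) $u\sim v_2$ and $u\not\sim v_3$, and symmetrically (c) $u\not\sim v_2$ and $u\sim v_3$, both of which produce the induced subgraph $J_2$; and (d) $u\sim v_2$ and $u\sim v_3$, which produces $J_3$. In every case, one of $J_1,J_2,J_3$ is forced as an induced subgraph of $G$, contradicting Lemma \ref{lem-x-3-7}. Therefore $G$ is $P_4$-free, i.e.\ a cograph, and Lemma \ref{lem-2-2}(2) yields the disconnectedness of $\bar G$; equivalently $G=G_1\nabla G_2$ for some graphs $G_1,G_2$, with each summand further decomposing into connected components.

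No serious obstacle is anticipated: the structural work has already been absorbed into Theorem \ref{thm-3-1} (diameter $2$) and Lemma \ref{lem-x-3-7} (forbidden $5$-vertex subgraphs). The only point that must be handled with care is the verification that the common neighbor $u$ produced by the diameter-$2$ condition lies outside $V(P)$; this is automatic from the induced nature of $P_4$, since the only members of $V(P)$ adjacent to $v_1$ or $v_4$ are $v_2$ and $v_3$, neither of which is adjacent to the opposite endpoint. Once that observation is in place the case analysis is purely adjacency bookkeeping.
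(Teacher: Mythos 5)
Your proposal is correct and follows essentially the same route as the paper: reduce to $P_4$-freeness via Lemma \ref{lem-2-2}, use $d(G)=2$ from Theorem \ref{thm-3-1} to produce a common neighbour $u$ of the endpoints, and derive one of $J_1,J_2,J_3$ as an induced subgraph, contradicting Lemma \ref{lem-x-3-7}. You merely spell out the four adjacency cases and the fact that $u\notin V(P)$, which the paper leaves implicit.
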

\begin{proof}
By Lemma \ref{lem-2-2}, it suffices to show that $G$ contains no induced $P_4$. Assume by contradiction that $G$ contains an induced $P_4=v_1v_2v_3v_4$. By Theorem \ref{thm-3-1}, we have $d(G)=2$. Therefore, there exists a vertex $u\in V(G)$ such that  $u\sim v_1,v_4$. It follows that at least one of $J_1$, $J_2$ and $J_3$ will be an induced subgraph of $G$, contradicts Lemma \ref{lem-x-3-7}.
\end{proof}

For any graph $G\in\mathcal{G}(n,n-3)$, we see that $G$ has at most four distinct eigenvalues, and we also have $\partial^L_{n-1}(G)=n$ by Theorems \ref{thm-2-3} and \ref{thm-3-2}.
Denote by \[\mathcal{H}_1(n)=\{G\in\mathcal{G}(n,n-3)\mid \emph{Spec}_{\mathcal{L}}(G)=[(\partial^L_1)^{n-3},\partial^L_{n-2},\partial^L_{n-1}=n,\partial^L_{n}=0]\},\] and \[\mathcal{H}_2(n)=\{G\in\mathcal{G}(n,n-3)\mid \emph{Spec}_{\mathcal{L}}(G)=[(\partial^L_1)^{n-3},\partial^L_{n-2}=\partial^L_{n-1}=n,\partial^L_{n}=0]\}.\] Therefore, $\mathcal{H}_1(n)$ and $\mathcal{H}_2(n)$ are the sets of graphs with four and three distinct eigenvalues in $\mathcal{G}(n,n-3)$, respectively. Thus we have the disjoint decomposition
\[\mathcal{G}(n,n-3)=\mathcal{H}_1(n)\cup\mathcal{H}_2(n). \]
Mohammadian \cite{Mohammadian} gave the following result.
\begin{lem}[\cite{Mohammadian}, Theorem 8]\label{lem-x-3-8}
Let $G$ be a graph on $n\ge 5$ vertices whose distinct Laplacian eigenvalues are $0<\alpha<\beta<\gamma$. Then the multiplicity of $\alpha$ is $n-3$ if and only if $G$ is one of the graphs $K_{2,n-2}$, $K_{n/2,n/2}+e$ or $K_{1,n-1}+e$, where $K_{n/2,n/2}+e$ and $K_{1,n-1}+e$ are the graphs obtained from $K_{n/2,n/2}$ and $K_{1,n-1}$, respectively, by adding an edge $e$ joining any two non-adjacent vertices.
\end{lem}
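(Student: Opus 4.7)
My plan is to prove the two directions separately.

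\textbf{Sufficiency.} For each candidate graph I would express it as a join and apply Lemma~\ref{lem-2-1}(iv). The decompositions
\[
K_{2,n-2}=\overline{K_2}\nabla\overline{K_{n-2}},\ K_{n/2,n/2}+e=\bigl(K_2\cup(n/2-2)K_1\bigr)\nabla\overline{K_{n/2}},\ K_{1,n-1}+e=K_1\nabla\bigl(K_2\cup(n-3)K_1\bigr)
\]
yield Laplacian spectra $\{n,n-2,2^{n-3},0\}$, $\{n,n/2+2,(n/2)^{n-3},0\}$, and $\{n,3,1^{n-3},0\}$ respectively, each with $\alpha$ of multiplicity $n-3$.

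\textbf{Necessity.} Assume $G$ has Laplacian spectrum $\{\gamma,\beta,\alpha^{n-3},0\}$ with $0<\alpha<\beta<\gamma$. By Lemma~\ref{lem-2-1}(iii), $\overline G$ has Laplacian spectrum $\{(n-\alpha)^{n-3},n-\beta,n-\gamma,0\}$. The crucial first step is to show $\gamma=n$, so that $\overline G$ is disconnected. The difficulty is that a priori both $G$ and $\overline G$ could be connected with mirrored multiplicities: $\alpha$ at the bottom of $G$'s nonzero spectrum and $n-\alpha$ at the top of $\overline G$'s. I would attack this by expanding the minimal polynomial $L(L-\alpha I)(L-\beta I)(L-\gamma I)=0$ entrywise together with the diameter bound $d(G)\le 3$ coming from four distinct Laplacian eigenvalues: the resulting system of constraints, combined with the symmetric analysis on $\overline G$, should force some vertex of $G$ to have degree $n-1$, equivalently $\mu_1(G)=n$.

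Once $\gamma=n$, $\overline G$ has $0$ of multiplicity $2$ and thus splits as $\overline G=H_1\cup H_2$ with two connected components summing to $n$ vertices; the nonzero spectrum $\{(n-\alpha)^{n-3},n-\beta\}$ distributes between them. I would then case-split on $|H_1|$. If $|H_1|=1$, then $H_2$ is connected on $n-1$ vertices with spectrum $\{(n-\alpha)^{n-3},n-\beta,0\}$; applying Lemma~\ref{lem-2-1}(iii) to $H_2$ gives $\overline{H_2}$ with spectrum $\{\beta-1,(\alpha-1)^{n-3},0\}$, and iterating the previous $\gamma=n$ argument forces $\alpha=1$, $\overline{H_2}=K_2\cup(n-3)K_1$, $\beta=3$, whence $G=K_{1,n-1}+e$. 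If instead $|H_1|=s\ge 2$, then $H_1$ has only two distinct Laplacian eigenvalues, so by Lemma~\ref{lem-2-1}(ii) and connectedness $H_1=K_s$ with $n-\alpha=s$; the spectrum of $H_2$ is then $\{s^{n-s-2},n-\beta,0\}$. Either $n-s-2=0$, giving $H_2=K_2$, $\alpha=2$, $\beta=n-2$, and $G=K_{2,n-2}$; or $n-s-2\ge 1$ and $\overline{H_2}$ has spectrum $\{\beta-s,(n-2s)^{n-s-2},0\}$, and applying the iterated argument forces $n-2s=0$. In the latter, $n$ is even, $s=n/2$, $H_2=K_{n/2}-e$, $\beta=n/2+2$, and $G=K_{n/2,n/2}+e$.

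The principal obstacle is the initial step of proving $\gamma=n$ and its iteration to $\overline{H_2}$. Beyond that, the case-analysis tree proceeds mechanically via Lemmas~\ref{lem-2-1}(ii)-(iv).
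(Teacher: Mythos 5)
First, a point of comparison: the paper does not prove this statement at all --- it is quoted from the literature (\cite{Mohammadian}, Theorem 8) and used as a black box, so there is no in-paper proof to measure your attempt against. Judging the proposal on its own merits: the sufficiency half is correct (the three join decompositions and the resulting Laplacian spectra $\{n,n-2,2^{n-3},0\}$, $\{n,n/2+2,(n/2)^{n-3},0\}$ and $\{n,3,1^{n-3},0\}$ all check out via Lemma~\ref{lem-2-1}(iv)), and the case analysis you run \emph{after} assuming $\gamma=n$ does land on the three graphs. The problem is that the two steps you yourself flag as the hard ones are not proved, and the route you propose for the first cannot work. You want to derive $\gamma=n$ by showing the entrywise relations from $L(L-\alpha I)(L-\beta I)(L-\gamma I)=0$ ``force some vertex of $G$ to have degree $n-1$, equivalently $\mu_1(G)=n$.'' That equivalence fails in the direction you need, and the intermediate claim is false for one of the very graphs you are trying to reach: $K_{2,n-2}$ satisfies every hypothesis of the lemma and has $\gamma=n$, yet its maximum degree is $n-2$. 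So no argument from the hypotheses can conclude that a dominating vertex exists; $\mu_1(G)=n$ is equivalent to $\overline{G}$ being disconnected (Lemma~\ref{lem-2-1}(i),(iii), Theorem~\ref{thm-2-3}), which is strictly weaker. Note also that $\gamma=n$ genuinely requires $n\ge 5$: $P_4$ has Laplacian spectrum $\{2+\sqrt{2},\,2,\,2-\sqrt{2},\,0\}$ with $\alpha$ of multiplicity $n-3=1$ and $\gamma<n$, so any correct argument must use $n\ge 5$ somewhere, and your sketch never does.

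The second gap is the ``iterated argument.'' In both branches of your case analysis you reduce to a connected graph on $m$ vertices with three distinct Laplacian eigenvalues $0<a<b$ in which $a$ has multiplicity $m-2$, and you assert that iteration forces $b=m$ (hence $\alpha=1$ in one branch, $n=2s$ in the other). This is a different statement from the four-eigenvalue step --- it is essentially the Laplacian form of the classification $\mathcal{G}(n,n-2)=\{K_{1,n-1},K_{n/2,n/2}\}$ --- and it cannot be obtained by ``iterating'' an argument that was itself never given. Until both sub-results ($\gamma=n$ for the four-eigenvalue case, and $b=m$ for the three-eigenvalue case) are actually established, the necessity direction is a plausible skeleton resting on two unproved, and in one case mis-aimed, pivots.
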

Note that, when $d(G)=2$, there exists a correspondence between the distance Laplacian spectrum and the Laplacian spectrum of $G$. We have the following result.
\begin{cor}\label{cor-3-2}
For an integer $n\ge 6$, we have $\mathcal{H}_1(n)=\{K_{2,n-2}, K_{n/2,n/2}+e, K_{1,n-1}+e\}$, and their distance Laplacian spectra are given by
\begin{equation}\label{111}\left\{\begin{array}{l}
\emph{Spec}_{\mathcal{L}}(K_{2,n-2})=\{(2n-2)^{n-3},n+2,n,0\}\\
\emph{Spec}_{\mathcal{L}}(K_{n/2,n/2}+e)=\{(\frac{3n}{2})^{n-3},\frac{3n}{2}-2,n,0\}\\
\emph{Spec}_{\mathcal{L}}(K_{1,n-1}+e)=\{(2n-1)^{n-3},2n-3,n,0\}
\end{array}\right.\end{equation}
\end{cor}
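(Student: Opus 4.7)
The plan is to pass from the distance Laplacian spectrum back to the Laplacian spectrum and then invoke Mohammadian's classification (Lemma \ref{lem-x-3-8}). By Theorem \ref{thm-3-1}, every $G\in\mathcal{G}(n,n-3)$ has diameter $2$, so Theorem \ref{thm-2-2} applies and furnishes the bijective correspondence $\partial^L_i=2n-\mu_{n-i}$ for $i=1,\ldots,n-1$ (together with $\partial^L_n=0$). In particular, the multiplicity profile of the distance Laplacian spectrum is the mirror image of that of the Laplacian spectrum.

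For $G\in\mathcal{H}_1(n)$ the distance Laplacian has exactly four distinct eigenvalues, with $\partial^L_1=2n-\mu_{n-1}$ of multiplicity $n-3$ and $\partial^L_{n-1}=n=2n-\mu_1$. Translating via the correspondence, $G$ has exactly four distinct Laplacian eigenvalues $0<\alpha<\beta<\gamma$, where $\alpha=\mu_{n-1}=\cdots=\mu_3$ has multiplicity $n-3$, $\beta=\mu_2$ has multiplicity $1$, and $\gamma=\mu_1=n$ has multiplicity $1$. Lemma \ref{lem-x-3-8} then gives $G\in\{K_{2,n-2},\,K_{n/2,n/2}+e,\,K_{1,n-1}+e\}$, so $\mathcal{H}_1(n)\subseteq\{K_{2,n-2},K_{n/2,n/2}+e,K_{1,n-1}+e\}$.

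For the reverse inclusion and for the explicit spectra in (\ref{111}), I would write each of the three candidates as a join and apply Lemma \ref{lem-2-1}(iv). Concretely, $K_{2,n-2}=\bar{K_2}\nabla\bar{K_{n-2}}$ yields Laplacian spectrum $\{n,\,n-2,\,2^{n-3},\,0\}$; the graph $K_{n/2,n/2}+e=(K_2\cup\bar{K_{n/2-2}})\nabla\bar{K_{n/2}}$ yields $\{n,\,n/2+2,\,(n/2)^{n-3},\,0\}$; and $K_{1,n-1}+e=K_1\nabla(K_2\cup\bar{K_{n-3}})$ yields $\{n,\,3,\,1^{n-3},\,0\}$. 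Each of these has diameter $2$ (easy direct check), four distinct Laplacian eigenvalues, and multiplicity $n-3$ on the smallest positive one, so each lies in $\mathcal{H}_1(n)$. Applying Theorem \ref{thm-2-2} once more converts these Laplacian spectra into the stated distance Laplacian spectra in (\ref{111}).

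The proof is essentially bookkeeping once Lemma \ref{lem-x-3-8} is quoted; the only place requiring mild care is verifying the orderings $\alpha<\beta<n$ for each candidate graph when $n\ge 6$ (e.g.\ $n/2<n/2+2<n$ needs $n>4$, and $1<3<n$ needs $n>3$), so that the spectra genuinely split into four distinct parts and the $n-3$-fold eigenvalue indeed corresponds to $\partial^L_1$ rather than being merged with $\beta$ or $\gamma$.
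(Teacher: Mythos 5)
Your proposal is correct and follows essentially the same route as the paper: diameter $2$ via Theorem \ref{thm-3-1}, the Laplacian--distance-Laplacian correspondence of Theorem \ref{thm-2-2} to invoke Lemma \ref{lem-x-3-8} for the forward inclusion, and join decompositions with Lemma \ref{lem-2-1}(iv) for the reverse inclusion and the explicit spectra. Your computations and the ordering checks for $n\ge 6$ all check out.
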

\begin{proof}
Let $G\in \mathcal{H}_1(n)$ and $\emph{Spec}_{\mathcal{L}}(G)=\{(\partial^L_1)^{n-3},\partial^L_{n-2},n,0\}$ where $\partial^L_1>\partial^L_{n-2}>n$. By Theorem \ref{thm-3-1}, we have $d(G)=2$. Therefore, by Theorem \ref{thm-2-2}, the Laplacian spectrum of $G$ is $\{n,2n-\partial^L_{n-2},(2n-\partial^L_1)^{n-3},0\}$. Thus, we get that $G\in \{K_{2,n-2}, K_{n/2,n/2}+e, K_{1,n-1}+e\}$ from Lemma \ref{lem-x-3-8}. Conversely, note that all of $K_{2,n-2}$, $K_{n/2,n/2}+e$ and $K_{1,n-1}+e$ are the join of two graphs, by Lemma \ref{lem-2-1} (iv) and Theorem \ref{thm-2-2}, we obtain their distance Laplacian spectra, which are shown in (\ref{111}).
Therefore, $K_{2,n-2},K_{n/2,n/2}+e,K_{1,n-1}+e\in \mathcal{H}_1(n)$, and the result follows.
\end{proof}
In what follows we characterise $\mathcal{H}_2(n)$.
\begin{lem}\label{lem-3-10}
For an integer $n\ge 6$, we have $\mathcal{H}_2(n)=\{K_2\nabla (n-2)K_1, K_1\nabla K_{\frac{n-1}{2},\frac{n-1}{2}},K_{\frac{n}{3},\frac{n}{3},\frac{n}{3}}\}$, and their distance Laplacian spectra are given by
\begin{equation}\label{222}\left\{\begin{array}{l}\emph{Spec}_{\mathcal{L}}(K_2\nabla (n-2)K_1)=\{(2n-2)^{n-3},n^2,0\}\\ \emph{Spec}_{\mathcal{L}}(K_1\nabla K_{\frac{n-1}{2},\frac{n-1}{2}})=\{((3n-1)/2)^{n-3},n^2,0\}\\ \emph{Spec}_{\mathcal{L}}(K_{\frac{n}{3},\frac{n}{3},\frac{n}{3}})=\{(4n/3)^{n-3},n^2,0\}\end{array}\right.\end{equation}
\end{lem}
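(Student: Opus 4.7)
The plan is to translate the problem from the distance Laplacian to the ordinary Laplacian of the complement $\bar G$, and then invoke the known structural classification of graphs with two distinct Laplacian eigenvalues. First, since any $G \in \mathcal{H}_2(n)$ satisfies $d(G) = 2$ by Theorem \ref{thm-3-1}, I would use Theorem \ref{thm-2-2} to convert $\emph{Spec}_{\mathcal{L}}(G) = \{(\partial^L_1)^{n-3}, n^2, 0\}$ into the Laplacian spectrum $\{n^2, \lambda^{n-3}, 0\}$ of $G$, where $\lambda := 2n - \partial^L_1$. Since $\partial^L_1 > n$, we have $\lambda < n$, so these are three distinct values.

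Next, applying Lemma \ref{lem-2-1}(iii) yields the Laplacian spectrum of $\bar G$ as $\{(n-\lambda)^{n-3}, 0^3\}$, and because $n - \lambda > 0$ this consists of exactly two distinct eigenvalues. By Lemma \ref{lem-2-1}(ii), $\bar G$ must be a disjoint union of copies of some $K_m$ with $m \ge 2$ together with possibly some isolated vertices, and by Lemma \ref{lem-2-1}(i) the multiplicity $3$ of the eigenvalue $0$ forces $\bar G$ to have exactly three connected components. Writing $\bar G = \alpha K_m \cup \beta K_1$ with $\alpha + \beta = 3$, $\alpha \ge 1$, and $m \ge 2$, the three possibilities $(\alpha,\beta) \in \{(3,0),(2,1),(1,2)\}$ give respectively $\bar G = 3K_{n/3}$ (when $3 \mid n$), $\bar G = 2K_{(n-1)/2} \cup K_1$ (when $n$ is odd), and $\bar G = K_{n-2} \cup 2K_1$. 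Taking complements recovers exactly the three graphs $K_{n/3,n/3,n/3}$, $K_1 \nabla K_{(n-1)/2,(n-1)/2}$, and $K_2 \nabla (n-2)K_1$, establishing the forward inclusion $\mathcal{H}_2(n) \subseteq \{K_2 \nabla (n-2)K_1,\; K_1 \nabla K_{(n-1)/2,(n-1)/2},\; K_{n/3,n/3,n/3}\}$.

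For the reverse inclusion and the explicit spectra (\ref{222}), observe that each of the three candidates is a join of simpler graphs of diameter at most $1$, so it has diameter $2$ (and is not complete). Lemma \ref{lem-2-1}(iv) then directly produces its Laplacian spectrum from the known spectra of $K_1$, $K_2$, $mK_1$, $K_{a,a}$, each of which is trivial to write down, and Theorem \ref{thm-2-2} converts these to the distance Laplacian spectra listed in (\ref{222}). A routine reading off of multiplicities confirms that in each case $\partial^L_1$ has multiplicity exactly $n - 3$ and $n$ has multiplicity $2$, hence each candidate indeed lies in $\mathcal{H}_2(n)$.

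The only step that requires any real thought is the application of Lemma \ref{lem-2-1}(ii), which reduces the entire classification problem to the combinatorics of writing $3 = \alpha + \beta$; once that reduction is in hand, everything else is bookkeeping through Theorem \ref{thm-2-2} and Lemma \ref{lem-2-1}(iv). No new spectral inequality or eigenvector argument in the spirit of Lemma \ref{lem-3-5} appears to be needed.
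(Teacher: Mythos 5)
Your proposal is correct and follows essentially the same route as the paper: pass to the Laplacian spectrum via Theorem \ref{thm-2-2}, complement via Lemma \ref{lem-2-1}(iii), classify $\bar G$ as a union of three components using Lemma \ref{lem-2-1}(i)--(ii), and verify the converse with Lemma \ref{lem-2-1}(iv). The case analysis over the number of isolated vertices and the computation of the spectra in (\ref{222}) match the paper's argument step for step.
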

\begin{proof}
Let $G\in\mathcal{H}_2(n)$ and $\emph{Spec}_{\mathcal{L}}(G)=\{(\partial^L_1)^{n-3},n^2,0\}$ where $\partial^L_1>n$. By Theorem \ref{thm-3-1}, we get that $d(G)=2$. Therefore, by Theorem \ref{thm-2-2}, the Laplacian spectrum of $G$ is $\{n^2,(2n-\partial^L_1)^{n-3},0\}$. By Lemma \ref{lem-2-1} (iii), the Laplacian spectrum of $\bar{G}$ is $\{(\partial^L_1-n)^{n-3},0^3\}$. By Lemma \ref{lem-2-1} (i), $\bar{G}$ has exactly three components, denoted by $G_1$, $G_2$ and $G_3$. Moreover, by Lemma \ref{lem-2-1} (ii), $G_1$, $G_2$ and $G_3$ are either complete graphs of the same order or isolate vertices. If none of them is an isolate vertex, then $G_1\cong G_2\cong G_3\cong K_{n/3}$. It follows that $G=\overline{3K_{n/3}}=K_{\frac{n}{3},\frac{n}{3},\frac{n}{3}}$. If there's exactly one of them is an isolate vertex, say $G_3$, then $G_1\cong G_2\cong K_{(n-1)/2}$. It follows that $G=\overline{2K_{(n-1)/2}\cup K_1}=K_1\nabla K_{\frac{n-1}{2},\frac{n-1}{2}}$. If there are exactly two of them are isolate vertices, say $G_2$ and $G_3$, then $G_1\cong K_{n-2}$. It follows that $G=\overline{K_{n-2}\cup 2K_1}=K_2\nabla (n-2)K_1$. Conversely, note that all of $K_2\nabla (n-2)K_1$, $K_1\nabla K_{\frac{n-1}{2},\frac{n-1}{2}}$ and $K_{\frac{n}{3},\frac{n}{3},\frac{n}{3}}$ are the join of two graphs, by Lemma \ref{lem-2-1} (iv) and Theorem \ref{thm-2-2}, we obtain their distance Laplacian spectra, which are shown in (\ref{222}). Therefore, $K_2\nabla (n-2)K_1, K_1\nabla K_{\frac{n-1}{2},\frac{n-1}{2}}, K_{\frac{n}{3},\frac{n}{3},\frac{n}{3}}\in\mathcal{H}_2(n)$, and the result follows.
\end{proof}
Recall that $\mathcal{G}(n,n-3)=\mathcal{H}_1(n)\cup\mathcal{H}_2(n)$. Combining Corollary \ref{cor-3-2} and Lemma \ref{lem-3-10}, we completely determine $\mathcal{G}(n,n-3)$ in the following result.
\begin{thm}\label{thm-3-3}For an integer $n\ge 6$, we have
\[\mathcal{G}(n,n-3)=\{K_{2,n-2},K_{1,n-1}+e,K_{n/2,n/2}+e,K_2\nabla (n-2)K_1, K_1\nabla K_{\frac{n-1}{2},\frac{n-1}{2}},K_{\frac{n}{3},\frac{n}{3},\frac{n}{3}}\}.\]
\end{thm}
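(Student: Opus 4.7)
The plan is to observe that all the hard work has already been done in Corollary \ref{cor-3-2} and Lemma \ref{lem-3-10}, so the theorem reduces to assembling these pieces via the decomposition $\mathcal{G}(n,n-3)=\mathcal{H}_1(n)\cup\mathcal{H}_2(n)$ announced just before Corollary \ref{cor-3-2}. First I would justify that decomposition in one line: for any $G\in\mathcal{G}(n,n-3)$ the eigenvalue $\partial^L_1$ accounts for multiplicity $n-3$ and $\partial^L_n=0$ for multiplicity $1$, so only the two positions $\partial^L_{n-2}\ge\partial^L_{n-1}$ remain. By Theorem \ref{thm-3-2}, $\bar G$ is disconnected, so Theorem \ref{thm-2-3} forces $\partial^L_{n-1}=n$, meaning that $G$ has either three or four distinct distance Laplacian eigenvalues. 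These are exactly the defining conditions for $\mathcal{H}_2(n)$ and $\mathcal{H}_1(n)$ respectively, giving $\mathcal{G}(n,n-3)=\mathcal{H}_1(n)\sqcup\mathcal{H}_2(n)$.

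Having set up the dichotomy, I would simply invoke Corollary \ref{cor-3-2} to identify
\[\mathcal{H}_1(n)=\{K_{2,n-2},\,K_{n/2,n/2}+e,\,K_{1,n-1}+e\},\]
and Lemma \ref{lem-3-10} to identify
\[\mathcal{H}_2(n)=\{K_2\nabla(n-2)K_1,\,K_1\nabla K_{\frac{n-1}{2},\frac{n-1}{2}},\,K_{\frac{n}{3},\frac{n}{3},\frac{n}{3}}\}.\]
Taking the union yields the six graphs in the statement, completing the proof.

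There is no genuine obstacle here; the result is a bookkeeping corollary of the structural theorems already established. The only point requiring a moment of care is making sure the parity/divisibility constraints implicit in the notation (namely that $K_{n/2,n/2}+e$ exists only for even $n$, $K_1\nabla K_{\frac{n-1}{2},\frac{n-1}{2}}$ only for odd $n$, and $K_{\frac{n}{3},\frac{n}{3},\frac{n}{3}}$ only when $3\mid n$) are understood to be conditional — each such graph appears in $\mathcal{G}(n,n-3)$ exactly for those values of $n$ for which it is defined, as already encoded inside the proofs of Corollary \ref{cor-3-2} and Lemma \ref{lem-3-10}.
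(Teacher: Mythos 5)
Your proposal matches the paper's argument exactly: the paper also derives the decomposition $\mathcal{G}(n,n-3)=\mathcal{H}_1(n)\cup\mathcal{H}_2(n)$ from Theorems \ref{thm-3-2} and \ref{thm-2-3} (which force $\partial^L_{n-1}=n$, leaving only three or four distinct eigenvalues) and then reads off the six graphs from Corollary \ref{cor-3-2} and Lemma \ref{lem-3-10}. The proof is correct and essentially identical to the paper's.
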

\begin{remark}
By using the software SageMath, we get the graphs with $m(\partial^L_1)=n-3$ for $n=4$ and $n=5$. That is,
\[\left\{\begin{array}{l}\mathcal{G}(4,1)=\{P_4,K_{1,3}+e,K_2\nabla 2K_1\}\\
\mathcal{G}(5,2)=\{K_{2,3},K_{1,4}+e,K_2\nabla 3K_1,K_1\nabla K_{2,2},C_5\} \end{array}\right..\]
\end{remark}
We end up this paper by the following result.
\begin{cor}\label{cor-3-3}
Let $G\in\mathcal{G}(n,n-3)$ with $n\ge 5$ then $G$ is determined by its distance Laplacian spectrum.
\end{cor}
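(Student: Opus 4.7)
The plan is to reduce the statement to a finite check against the explicit list in Theorem \ref{thm-3-3}. Suppose $H$ is any connected graph on $n$ vertices with $\mathit{Spec}_{\mathcal{L}}(H)=\mathit{Spec}_{\mathcal{L}}(G)$. Since the multiplicity $m(\partial^L_1)$ is read directly from the spectrum, we have $m(\partial^L_1(H))=m(\partial^L_1(G))=n-3$, so $H\in\mathcal{G}(n,n-3)$. Hence the problem becomes: show that distinct graphs in $\mathcal{G}(n,n-3)$ have distinct distance Laplacian spectra.

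For $n\ge 6$, Theorem \ref{thm-3-3} lists exactly six candidates, partitioned as $\mathcal{H}_1(n)$ and $\mathcal{H}_2(n)$. Graphs in $\mathcal{H}_1(n)$ have four distinct distance Laplacian eigenvalues whereas graphs in $\mathcal{H}_2(n)$ have only three, so no graph in $\mathcal{H}_1(n)$ is cospectral with any graph in $\mathcal{H}_2(n)$. Within $\mathcal{H}_1(n)$, equation (\ref{111}) gives $\partial^L_1\in\{2n-2,\,3n/2,\,2n-1\}$ for the three graphs respectively; an easy check shows these three values are pairwise distinct for $n\ge 6$. Within $\mathcal{H}_2(n)$, equation (\ref{222}) gives $\partial^L_1\in\{2n-2,\,(3n-1)/2,\,4n/3\}$, again pairwise distinct for $n\ge 6$. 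So each of the six graphs has a unique distance Laplacian spectrum, completing the case $n\ge 6$.

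For $n=5$, Theorem \ref{thm-3-3} does not apply, and we instead use the list $\mathcal{G}(5,2)=\{K_{2,3},K_{1,4}+e,K_2\nabla 3K_1,K_1\nabla K_{2,2},C_5\}$ from the remark. The first four graphs have their distance Laplacian spectra given by specializing (\ref{111}) and (\ref{222}) at $n=5$, yielding the pairwise distinct multisets $\{8^2,7,5,0\}$, $\{9^2,7,5,0\}$, $\{8^2,5^2,0\}$, and $\{7^2,5^2,0\}$. For $C_5$, one has $d(C_5)=2$, so Theorem \ref{thm-2-2} gives the distance Laplacian spectrum as $\{10-\mu_i\}_{i=1}^{4}\cup\{0\}$, whose nonzero part contains the irrational entry $8+2\cos(2\pi/5)$. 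Since the other four spectra at $n=5$ are integral, $C_5$ is distinguished, and the five graphs in $\mathcal{G}(5,2)$ have pairwise distinct spectra as well.

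There is no real mathematical obstacle here beyond the structural classification already proved: once Theorem \ref{thm-3-3} is in hand, the argument is purely bookkeeping on the explicit spectra (\ref{111}) and (\ref{222}). The only mild subtlety is handling $n=5$ separately, since $C_5$ lies in $\mathcal{G}(5,2)$ but not in the family produced for $n\ge 6$; this is dispatched by the rationality observation above.
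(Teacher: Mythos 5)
Your proof is correct and follows essentially the same route as the paper: reduce the cospectral graph $H$ to membership in $\mathcal{G}(n,n-3)$ and then compare the explicit spectra in (\ref{111}) and (\ref{222}) pairwise. You are in fact slightly more careful than the paper's one-line proof, which cites only (\ref{111}) and (\ref{222}) and thus leaves implicit the $n=5$ case, where $C_5\in\mathcal{G}(5,2)$ must be separated from the other four graphs by the irrationality of its distance Laplacian spectrum, exactly as you do.
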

\begin{proof}
Let $H\in\mathcal{G}(n)$ with $\emph{Spec}_{\mathcal{L}}(H)=\emph{Spec}_{\mathcal{L}}(G)$. We get that $H\in \mathcal{G}(n,n-3)$. Then, the result follows by pairwise comparing the distance Laplacian spectra of graphs in $\mathcal{G}(n,n-3)$, which are presented in (\ref{111}) and (\ref{222}).
\end{proof}

\end{document}